\makeatletter\patchcmd{\@startsection}{\@afterindenttrue}{\@afterindentfalse}{}{}\makeatother    
\patchcmd{\section}{\scshape}{\bfseries}{}{}\makeatletter\renewcommand{\@secnumfont}{\bfseries}\makeatother           
\providecommand \@dotsep{5} \def\listtodoname{List of Todos} \def\listoftodos{\@starttoc{tdo}\listtodoname} \makeatother 
\DeclareRobustCommand{\gobblefour}[4]{}
\DeclareSymbolFont{sfoperators}{OT1}{bch}{m}{n}
\DeclareSymbolFontAlphabet{\mathsf}{sfoperators}
\def\operator@font{\mathgroup\symsfoperators}\makeatother 
\DeclareSymbolFont{cmletters}{OML}{cmm}{m}{it}
\DeclareSymbolFont{cmsymbols}{OMS}{cmsy}{m}{n}
\DeclareSymbolFont{cmlargesymbols}{OMX}{cmex}{m}{n}
\DeclareMathSymbol{\myjmath}{\mathord}{cmletters}{"7C}
\let\jmath\myjmath 
\DeclareMathSymbol{\myalpha}{\mathord}{cmletters}{"0B}
\let\alpha\myalpha 
\DeclareMathSymbol{\mybeta}{\mathord}{cmletters}{"0C} \let\beta\mybeta
\DeclareMathSymbol{\mygamma}{\mathord}{cmletters}{"0D} \let\gamma\mygamma
\DeclareMathSymbol{\mydelta}{\mathord}{cmletters}{"0E} \let\delta\mydelta
\DeclareMathSymbol{\myepsilon}{\mathord}{cmletters}{"0F}
\let\epsilon\myepsilon
\DeclareMathSymbol{\myzeta}{\mathord}{cmletters}{"10} \let\zeta\myzeta
\DeclareMathSymbol{\myeta}{\mathord}{cmletters}{"11} \let\eta\myeta
\DeclareMathSymbol{\mytheta}{\mathord}{cmletters}{"12} \let\theta\mytheta
\DeclareMathSymbol{\myiota}{\mathord}{cmletters}{"13} \let\iota\myiota
\DeclareMathSymbol{\mykappa}{\mathord}{cmletters}{"14} \let\kappa\mykappa
\DeclareMathSymbol{\mylambda}{\mathord}{cmletters}{"15} \let\lambda\mylambda
\DeclareMathSymbol{\mymu}{\mathord}{cmletters}{"16} \let\mu\mymu
\DeclareMathSymbol{\mynu}{\mathord}{cmletters}{"17} \let\nu\mynu
\DeclareMathSymbol{\myxi}{\mathord}{cmletters}{"18} \let\xi\myxi
\DeclareMathSymbol{\mypi}{\mathord}{cmletters}{"19} \let\pi\mypi
\DeclareMathSymbol{\myrho}{\mathord}{cmletters}{"1A} \let\rho\myrho
\DeclareMathSymbol{\mysigma}{\mathord}{cmletters}{"1B} \let\sigma\mysigma
\DeclareMathSymbol{\mytau}{\mathord}{cmletters}{"1C} \let\tau\mytau
\DeclareMathSymbol{\myupsilon}{\mathord}{cmletters}{"1D}
\let\upsilon\myupsilon
\DeclareMathSymbol{\myphi}{\mathord}{cmletters}{"1E} \let\phi\myphi
\DeclareMathSymbol{\mychi}{\mathord}{cmletters}{"1F} \let\chi\mychi
\DeclareMathSymbol{\mypsi}{\mathord}{cmletters}{"20} \let\psi\mypsi
\DeclareMathSymbol{\myomega}{\mathord}{cmletters}{"21} \let\omega\myomega
\DeclareMathSymbol{\myvarepsilon}{\mathord}{cmletters}{"22}\let\varepsilon\myvarepsilon
\DeclareMathSymbol{\myvartheta}{\mathord}{cmletters}{"23}
\let\vartheta\myvartheta
\DeclareMathSymbol{\myvarpi}{\mathord}{cmletters}{"24} \let\varpi\myvarpi
\DeclareMathSymbol{\myvarrho}{\mathord}{cmletters}{"25} \let\varrho\myvarrho
\DeclareMathSymbol{\myvarsigma}{\mathord}{cmletters}{"26}
\let\varsigma\myvarsigma
\DeclareMathSymbol{\myvarphi}{\mathord}{cmletters}{"27} \let\varphi\myvarphi
\newtheorem{thmA}{Theorem} 
\newaliascnt{propA}{thmA}\newtheorem{propA}[propA]{Proposition}\aliascntresetthe{propA}
\newaliascnt{corA}{thmA}\aliascntresetthe{corA}
\theoremstyle{plain}
\newtheorem{thm}{Theorem}[section] 
\newaliascnt{lemma}{thm}\newtheorem{lemma}[lemma]{Lemma}\aliascntresetthe{lemma}
\newaliascnt{cor}{thm}\newtheorem{cor}[cor]{Corollary}\aliascntresetthe{cor}
\newaliascnt{prop}{thm}\newtheorem{prop}[prop]{Proposition}\aliascntresetthe{prop}
\newtheorem*{notation}{Notation}
\newtheorem*{thm*}{Theorem}
\newtheorem*{lem*}{Lemma}
\newtheorem*{conj*}{Conjecture}
\newtheorem*{cor*}{Corollary}
\newtheorem*{problem*}{Problem}
\theoremstyle{definition}
\newaliascnt{df}{thm}\newtheorem{df}[df]{Definition}\aliascntresetthe{df}
\newaliascnt{rem}{thm}\newtheorem{rem}[rem]{Remark}\aliascntresetthe{rem}
\newaliascnt{ex}{thm}\aliascntresetthe{ex}
\newtheorem*{df*}{Definition}
\newtheorem*{ex*}{Example}
\newtheorem*{rem*}{Remark}
\patchcmd{\@startsection}{\@afterindenttrue}{\@afterindentfalse}{}{}             
\patchcmd{\part}{\bfseries}{\bfseries\LARGE}{}{}
\patchcmd{\section}{\scshape}{\bfseries}{}{}\renewcommand{\@secnumfont}{\bfseries} 
\patchcmd{\@settitle}{\uppercasenonmath\@title}{\large}{}{}
\patchcmd{\@setauthors}{\MakeUppercase}{}{}{}
\DeclareFontFamily{OT1}{pzc}{}                                
\DeclareFontShape{OT1}{pzc}{m}{it}{<-> s * [1.10] pzcmi7t}{}
\DeclareMathAlphabet{\mathpzc}{OT1}{pzc}{m}{it}
\DeclareSymbolFont{sfoperators}{OT1}{bch}{m}{n} \DeclareSymbolFontAlphabet{\mathsf}{sfoperators} \makeatletter\def\operator@font{\mathgroup\symsfoperators}\makeatother 
\DeclareSymbolFont{cmletters}{OML}{cmm}{m}{it}     
\DeclareMathOperator{\Spec}{Spec}
\DeclareMathOperator{\MSpec}{MSpec}
\DeclareMathOperator{\MSch}{MSch}
\DeclareMathOperator{\SCong}{SCong}
\DeclareMathOperator{\congker}{congker}
\newcommand\N{{\mathbb N}}
\newcommand\Z{{\mathbb Z}}
\newcommand\Q{{\mathbb Q}}
\newcommand\R{{\mathbb R}}
\newcommand\C{{\mathbb C}}
\newcommand\F{{\mathbb F}}
\newcommand\PL{{\mathbb P}}
\title{On Smirnov's approach to the ABC conjecture}
\author{Manoel Jarra}
\address{\rm Manoel Jarra, University of Groningen, the Netherlands, and IMPA, Rio de Janeiro, Brazil}
\email{{m.zanoelo.jarra@rug.nl}}
\begin{document}

\begin{abstract}
We use algebraic geometry over pointed monoids to give an intrinsic interpretation for the compactification of the spectrum of the ring of integers of a number field $K$, for the projective line over algebraic extensions of $\F_1$ and for maps between them induced by elements of $K$, as introduced by Alexander Smirnov in his approach to the ABC conjecture.

\end{abstract}

\maketitle

\begin{small} \tableofcontents \end{small}

\VerbatimFootnotes   
\thispagestyle{empty} 

\section*{Introduction}
\label{introduction}

In \cite{Smirnov93}, Smirnov proposes an approach to the ABC conjecture based on the analogy between number fields and function fields of algebraic curves (see also \cite{LeBruyn16}).

The main idea is to consider the ``compactification'' $\overline{\Spec \Z}^\textup{Smi}$ of $\Spec \Z$ as a curve over ``the field with one element'' $\F_1$. The curve $\overline{\Spec \Z}^\textup{Smi}$ is defined as the set of non-trivial places of the ``function field'' $\Q$, {i.e.}, as the set
\[
\{[2], [3], [5], [7], [11], \dotsc\} \cup \{[\infty]\},
\]
where $[p]$ is the class of the $p$-adic valuation
\[
q \mapsto v_p(q) = n \quad \text{if} \quad q = p^n\dfrac{a}{b} \enspace \text{ with } \enspace a, b \in \Z \enspace \text{ and } \enspace p\not|ab,
\]
and $[\infty]$ is the class of the achimedean valuation
\[
q \mapsto v_\infty(q) = -\textup{log}(|q|).
\]
The degree of a point is given by
\[
\textup{deg}([p]) = \textup{log}(p) \quad \text{and} \quad \textup{deg}([\infty]) = 1,
\]
which is the unique choice (up to common multiple) that satisfies
\[
\underset{[x]}{\sum} v_x(q) \textup{deg}([x]) = 0 \quad \text{ for all } \quad q \in \Q^*.
\]

The set of non-zero elements of the ``field of constants'' of $\Q$ is 
\[
\{q \in \Q^* \mid v_x(q) = 0 \text{ for all } [x]\} = \{1, -1\}.
\]
The projective line $\PL_{\F_1}^{1, \textup{Smi} }$ is defined as $\big\{[n] \; \big| \; n \in \N\} \cup \{[\infty]\}$, with degree map given by 
\[
\textup{deg}([0]) = 1 = \textup{deg}([\infty]) \quad \text{and} \quad \textup{deg}([n]) = \phi(n) \quad \text{ for } \quad 0 < n < \infty,
\]
where $\phi$ is the Euler function.

Each ``non-constant'' $q = \frac{a}{b} \in \Q^*\backslash\{1, -1\}$ with $\textup{gcd}(a,b) = 1$ defines a map $\varphi_q$ from $\overline{\Spec \Z}^\textup{Smi}$ to $\PL_{\F_1}^{1, \textup{Smi} }$, given by

\vspace{5mm}
\[
\begin{array}{ccl}
[x] &\longmapsto & \smash{\left\{\begin{array}{lll}
      [0] &\text{if } x = p \neq \infty &\text{ and } p \mid a\\
      {}[\infty] &\text{if } x = p \neq \infty &\text{ and } p \mid b\\
      {}[n] & \text{if } x = p \neq \infty &\text{ and } p \nmid ab, \text{ where }  n = \textup{ord}\big(\overline{a}\overline{b}^{-1}\big) \text{ in } \F_p^*\\
      {}[0] &\text{if } x = \infty &\text{ and } a < b\\
      {}[\infty] &\text{if } x = \infty &\text{ and } a > b.
    \end{array}\right.}
\end{array}
\]
\\

\vspace{1mm}

The ramification index of a point in $\overline{\Spec \Z}^\textup{Smi}$ is
\vspace{2mm}
\\
\[
\begin{array}{cl}
e_{[p]} := & \smash{\left\{\begin{array}{ll}
     \textup{max}\{k \mid p^k \text{ divides } a\} &\text{if } \varphi_q([p]) = [0]\\
       {}\textup{max}\{k \mid p^k \text{ divides } b\} &\text{if } \varphi_q([p]) = [\infty]\\
       {}\textup{max}\{k \mid p^k \text{ divides } a^n - b^n\} &\text{if } \varphi_q([p]) = [n]
    \end{array}\right.}
\end{array}
\]
\\
if $p$ is a prime number, and $e_{[\infty]} = -\textup{log}(|q|)$. 

In analogy with the function field case, where the degree of a non-constant map between curves is the degree of its divisor of zeros, the degree of $\varphi_q$ is
\[
\textup{deg}(\varphi_q) \;\;\;  = \underset{v_x(f) > 0}{\sum} v_x(f)\textup{deg}([x]).
\]

The \textit{arithmetic defect} of a point $[x] \in \overline{\Spec \Z}^\textup{Smi}$ is
\[
\delta_{[x]} := \dfrac{(e_{[x]} - 1) \deg([x])}{\textup{deg}(\varphi_q)}.
\]

\begin{conj*}[{\cite{Smirnov93}}]
\label{analogous to Hurwitz}
For each $\epsilon > 0$, there exists a constant $C$ satisfying
\[
\underset{[x] \in X(q)}{\sum} \delta_{[x]} \leq 2 + \epsilon + \dfrac{C}{\textup{deg}(\varphi_q)},
\]
where $X(q)$ is the set $\{[x] \in \overline{\Spec \Z} \mid \varphi_q([x]) \text{ has degree } 1\}$.
\end{conj*}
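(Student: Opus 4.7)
The plan is to imitate the function--field proof of Riemann--Hurwitz on the arithmetic ``curve'' $\overline{\Spec \Z}^{\textup{Smi}}$, and reduce the asserted bound to an ABC-type inequality in $a$ and $b$. First I would unpack the left-hand side. The degree-one points of $\PL_{\F_1}^{1,\textup{Smi}}$ are $[0]$, $[\infty]$, $[1]$ and $[2]$ (since $\phi(1)=\phi(2)=1$); from the definition of $\varphi_q$, the set $X(q)$ consists of $[\infty]$ together with the primes dividing $a$, $b$, $a-b$, or $a+b$, these four groups being pairwise disjoint by coprimality of $a$ and $b$. Substituting the definition of $e_{[x]}$ and collecting the contributions of the four groups yields
\[
\sum_{[x] \in X(q)}(e_{[x]}-1)\deg([x]) \;=\; \log\!\frac{|a|\,|b|\,|a-b|\,|a+b|}{\textup{rad}\bigl(ab(a-b)(a+b)\bigr)} \;+\; (e_{[\infty]}-1),
\]
while the product formula applied to $q=a/b$ gives $\deg(\varphi_q) = \log\max(|a|,|b|)$ up to $O(1)$. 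Dividing, the conjecture reduces to a four-term, ABC-style inequality
\[
\log\bigl(|a|\,|b|\,|a-b|\,|a+b|\bigr) \;\leq\; (2+\epsilon)\log\max(|a|,|b|) \;+\; \log\textup{rad}\bigl(ab(a-b)(a+b)\bigr) \;+\; O(1).
\]

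Next I would try to produce this inequality from a genuine Hurwitz-type identity on $\overline{\Spec \Z}^{\textup{Smi}}$. In the function-field case it comes from $\deg(\Omega^1_{C/k}) = \deg(\varphi_f^*\Omega^1_{\PL^1/k}) + \deg(R_{\varphi_f})$ combined with $\deg(\Omega^1_{C/k}) = 2g_C - 2$. Translating this into the pointed-monoid framework built in the body of the paper would require: (a) a cotangent module for $\overline{\Spec \Z}^{\textup{Smi}}$ in the category of $\F_1$-schemes; (b) a degree function on line bundles that extends $\deg([x])$ to arbitrary divisors and is compatible with pullback along $\varphi_q$; and (c) a bound on the resulting ``arithmetic genus'' of $\overline{\Spec \Z}^{\textup{Smi}}$ that pins the leading constant on the right-hand side to $2$ rather than something larger.

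The crux is (c), and in fact the displayed inequality is equivalent to, or a strengthening of, the ABC conjecture itself. Consequently no purely formal manipulation inside the $\F_1$-formalism can close the argument: any attack must at some point inject analytic or height-theoretic input that transcends the combinatorial geometry of pointed monoids. The role of the present paper is not to supply that input, but to make steps (a) and (b) rigorous, so that the objects $\overline{\Spec \Z}^{\textup{Smi}}$, $\PL_{\F_1}^{1,\textup{Smi}}$ and $\varphi_q$ acquire an intrinsic scheme-theoretic meaning and Smirnov's conjecture becomes a precise geometric statement rather than an analogical one. With that foundation in place, proving the inequality remains beyond current methods.
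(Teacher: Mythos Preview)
The statement you were given is not a theorem but a \emph{conjecture}: in the paper it is explicitly labelled \texttt{conj*} and attributed to Smirnov, and the very next line states (again citing Smirnov) that \emph{if} this conjecture holds \emph{then} the ABC conjecture follows. The paper contains no proof of this statement and does not claim to; its contribution is to give the objects $\overline{\Spec\Z}^{\textup{Smi}}$, $\PL^{1,\textup{Smi}}_{\F_1}$ and $\varphi_q$ an intrinsic home in monoid-scheme geometry, not to establish the Hurwitz-type inequality. So there is nothing in the paper to compare your argument against.

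Your write-up essentially discovers this for itself: after correctly identifying the degree-one targets $[0],[\infty],[1],[2]$ and unwinding the defect sum into a radical inequality, you observe that the resulting bound is of ABC strength and conclude that ``proving the inequality remains beyond current methods.'' That conclusion is exactly right, and it is the reason the paper presents the statement as a conjecture. What you have written is therefore not a proof (nor could it be, absent a proof of ABC), but a useful heuristic reduction; just be aware that it was never the paper's intention to prove this statement, so framing your text as a ``proof proposal'' for it is a category error. One small caveat in your reduction: the four prime groups are not literally pairwise disjoint at $p=2$ (if $a,b$ are both odd then $2\mid a-b$ and $2\mid a+b$), but since $\varphi_q$ assigns each $[p]$ a single image this only affects bookkeeping, not the shape of the inequality.
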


\begin{thm*}[{\cite{Smirnov93}}]
If the conjecture above holds, then the ABC conjecture is true.
\end{thm*}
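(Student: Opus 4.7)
The plan is to apply Smirnov's inequality to a rational number encoding an ABC triple and unfold the resulting defect bound into the standard ABC statement. Fix $\epsilon>0$ and let $(a,b,c)$ be a triple of positive coprime integers with $a+b=c$; excluding the trivial case $a=b=1$, we may assume after relabelling that $a<b$. Set $q=a/b\in\Q^*\setminus\{1,-1\}$ and apply Smirnov's inequality
\[
\sum_{[x]\in X(q)}\delta_{[x]} \;\leq\; 2+\epsilon+\frac{C}{\textup{deg}(\varphi_q)}.
\]

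The first task is to describe $X(q)$ explicitly. Since $\textup{deg}([n])=\phi(n)$ for $0<n<\infty$ and $\phi(n)=1$ only for $n\in\{1,2\}$, the degree-one points of $\PL_{\F_1}^{1,\textup{Smi}}$ are $[0]$, $[1]$, $[2]$, and $[\infty]$. Unfolding the definition of $\varphi_q$, the set $X(q)$ consists of the primes $p\mid a$ (going to $[0]$), the primes $p\mid b$ (going to $[\infty]$), the primes $p\mid b-a$ (going to $[1]$, since $\overline{a}\,\overline{b}^{-1}=1$ in $\F_p^*$), the primes $p\mid a+b=c$ (going to $[2]$, since $\overline{a}\,\overline{b}^{-1}=-1$), and the archimedean place $[\infty]$ (going to $[0]$ because $a<b$).

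Next one evaluates both sides. The degree formula gives
\[
\textup{deg}(\varphi_q) = \sum_{p\mid a} v_p(a)\,\textup{log}\,p + \textup{log}(b/a) = \textup{log}\,b,
\]
and the ramification indices are $e_{[p]}=v_p(a)$, $v_p(b)$, or $v_p(b-a)$ in the first three cases, while for odd $p\mid c$ one has $v_p(a^2-b^2)=v_p(c)$ by coprimality, so $e_{[p]}=v_p(c)$ (the prime $p=2$ contributes only an $O(1)$ correction). Finally $e_{[\infty]}=\textup{log}(b/a)$. Using $\sum_{p\mid n}(v_p(n)-1)\,\textup{log}\,p=\textup{log}(n/\textup{rad}(n))$ together with the pairwise coprimality of $a,b,c$ (which forces $\textup{rad}(abc)=\textup{rad}(a)\,\textup{rad}(b)\,\textup{rad}(c)$), and dropping the nonnegative contribution of the primes over $[1]$, one obtains
\[
\sum_{[x]\in X(q)}(e_{[x]}-1)\,\textup{deg}([x]) \;\geq\; \textup{log}\frac{abc}{\textup{rad}(abc)}+\textup{log}(b/a)-O(1).
\]

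Dividing by $\textup{deg}(\varphi_q)=\textup{log}\,b$, substituting into Smirnov's inequality, and cancelling the $\textup{log}\,a$ appearing on both sides yields
\[
\textup{log}\,c \;\leq\; \textup{log}\,\textup{rad}(abc)+\epsilon\,\textup{log}\,b+O_\epsilon(1) \;\leq\; \textup{log}\,\textup{rad}(abc)+\epsilon\,\textup{log}\,c+O_\epsilon(1),
\]
using $b\leq c$. Exponentiating and rescaling $\epsilon$ delivers $c\leq C_\epsilon\,\textup{rad}(abc)^{1+\epsilon}$, which is the ABC conjecture. The only genuinely delicate step is the bookkeeping of the archimedean defect $\delta_{[\infty]}$ and of the degenerate prime $p=2$, at which $\phi(1)=\phi(2)=1$ conflates the cases $[x]\mapsto[1]$ and $[x]\mapsto[2]$ and $v_2(a^2-b^2)$ may exceed $v_2(c)$; both are bounded contributions absorbed by the additive term $C/\textup{deg}(\varphi_q)$ in Smirnov's inequality, so the deduction goes through.
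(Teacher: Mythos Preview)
The paper does not supply its own proof of this implication; the theorem is simply quoted from Smirnov's article, so there is no in-paper argument to compare against. Your outline follows Smirnov's strategy and is correct in structure, but your dismissal of the prime $p=2$ as a ``bounded contribution absorbed by $C/\deg(\varphi_q)$'' is a genuine gap, not a bookkeeping detail.

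With your choice $q=a/b$, an odd prime $p\mid c=a+b$ indeed lands in the fiber over $[2]$, but when $2\mid c$ (so $a,b$ are both odd) the prime $p=2$ lands over $[1]$, with ramification $e_{[2]}=v_2(a-b)$. Since exactly one of $v_2(a-b)$ and $v_2(a+b)$ equals $1$, whenever $v_2(c)\geq 2$ one has $v_2(a-b)=1$ and hence $\delta_{[2]}=0$. The term $(v_2(c)-1)\log 2$ is therefore entirely missing from your lower bound for the defect sum, and it is \emph{not} bounded: for the triples $a=1$, $b=2^n-1$, $c=2^n$ the missing quantity is $(n-1)\log 2$, which is of the same order as $\deg(\varphi_q)=\log(2^n-1)$. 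After dividing by $\deg(\varphi_q)$ the loss is of size roughly $1$, not $O(1/\deg(\varphi_q))$, and for this family your inequality degenerates to a triviality that says nothing about $c$.

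The remedy is to take $q=-a/b$ instead. Then $\overline{(-a)}\cdot\overline{b}^{-1}=1$ in $\F_p^\times$ precisely when $p\mid a+b=c$, so \emph{all} primes of $c$, including $p=2$, lie over $[1]$ with ramification $v_p\big((-a)-b\big)=v_p(c)$; the primes of $b-a$ move to the fiber over $[2]$ and can be safely discarded. With this choice of $q$ your computation goes through verbatim, with no correction at $2$, and yields the ABC inequality exactly as you wrote.
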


\begin{rem*}
The conjecture above can be understood as analogous to the inequality
\[
\underset{\mathfrak{P} \text{ prime of } L}{\sum} \dfrac{\big(e(\mathfrak{P} / P) - 1 \big)\textup{deg}_L \mathfrak{P}}{[L : K]} \leq 2 - 2g_K + \dfrac{2g_L - 2}{[L : K]},
\]
which follows from the well-known Riemann-Hurwitz formula for finite, separable, geometric extensions of function fields $L/K$ (see \cite[Thm.\ 7.16]{Rosen02}).
\end{rem*}


In this paper we link Smirnov's approach with strong congruence spaces of monoid schemes, which are topological spaces that locally characterize maps from the corresponding coordinate monoids into fields. 


The next result shows how to recover $\PL_{\F_1}^{1, \textup{Smi} }$ by using $\SCong \PL_{\F_1}^1$, the strong congruence space of the projective line over $\F_1$.

\begin{thmA}[\autoref{Smirnov's projective line as strong congruence space F1}]
There is a canonical inclusion $i: \PL_{\F_1}^{1, \textup{Smi}} \rightarrow \SCong \PL_{\F_1}^1$ whose image consists of all non-generic points of $\SCong \PL_{\F_1}^1$.
\end{thmA}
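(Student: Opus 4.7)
The plan is to work affine-locally on $\PL^1_{\F_1}$, using its standard cover by two charts $U_0 = \Spec \F_1[T]$ and $U_\infty = \Spec \F_1[T^{-1}]$ glued along $U_0 \cap U_\infty = \Spec \F_1[T, T^{-1}]$. Assuming $\SCong$ turns monoid schemes into topological spaces compatibly with such covers, it suffices to describe $\SCong U_0$, $\SCong U_\infty$ and the identifications coming from $\SCong(U_0 \cap U_\infty)$, and then check that the resulting set of non-generic points is exactly $\PL_{\F_1}^{1,\textup{Smi}}$.

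The first step is to classify the points of $\SCong \A^1_{\F_1}$. A strong congruence on the free pointed monoid $\F_1[T] = \{0, 1, T, T^2, \dotsc\}$ is the kernel of a monoid homomorphism $\F_1[T] \to K$ into a field $K$, and such a homomorphism is determined by the image $t$ of $T$. I distinguish three cases: (a) if $t = 0$, the congruence identifies all positive powers of $T$ with $0$, giving a point I label $[0]$; (b) if $t \in K^\times$ has finite multiplicative order $n \geq 1$, the congruence is $T^a \sim T^b \iff a \equiv b \pmod n$, giving a point I label $[n]$; (c) if $t$ has infinite order, the congruence is the diagonal, yielding the (unique) generic point. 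This shows that the non-generic points of $\SCong U_0$ are in bijection with $\{[0], [1], [2], \dotsc\}$.

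The second step is to carry out the gluing with the chart $U_\infty$. The same analysis applied to $\F_1[T^{-1}]$ yields non-generic points $[0]', [1]', [2]', \dotsc$. On the overlap $U_0 \cap U_\infty$, the element $T$ is a unit, so the homomorphism $T \mapsto t$ factors through $\F_1[T, T^{-1}]$ precisely when $t \neq 0$; hence for $n \geq 1$ the points $[n]$ and $[n]'$ are identified via the restriction to $\Spec \F_1[T,T^{-1}]$, and the two generic points are likewise identified, while $[0]$ (from $U_0$) and $[0]'$ (from $U_\infty$) remain distinct and become the two ``cusps''. Renaming $[0]'$ as $[\infty]$, the set of non-generic points of $\SCong \PL^1_{\F_1}$ is precisely $\{[0], [1], [2], \dotsc\} \cup \{[\infty]\}$.

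The map $i$ is then defined by sending each point of $\PL_{\F_1}^{1, \textup{Smi}}$ to the strong congruence point with the same label. Injectivity is immediate from the classification, and by construction the image is exactly the complement of the generic point in $\SCong \PL^1_{\F_1}$. The main obstacle I foresee is bookkeeping rather than conceptual: one must verify that the formal definition of $\SCong$ introduced earlier is functorial for the open immersions $U_0 \cap U_\infty \hookrightarrow U_0$ and $U_0 \cap U_\infty \hookrightarrow U_\infty$ in a way that realises $\SCong \PL^1_{\F_1}$ as the pushout of the two affine strong congruence spaces along $\SCong(U_0 \cap U_\infty)$, so that the gluing I describe above is legitimate and no extra non-generic points appear.
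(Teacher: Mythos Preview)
Your argument is correct and follows essentially the same route as the paper: classify the strong prime congruences on $\F_1[T]$ and then use the affine cover $D_+(Y)\cup D_+(X)$ of $\PL^1_{\F_1}$ to add the single point $[\infty]$. The gluing you worry about is exactly the property recorded in the paper as item~(iii) of the functor $\SCong_k(-)$, so no extra non-generic points can appear.

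The only real difference is one of generality. The paper first proves \autoref{description projective line} for an arbitrary pointed subgroup $\F\subseteq\F_{1^\infty}$, where the non-generic points are the congruences $\langle (X/Y)^n\sim\lambda\rangle$ subject to a minimality condition on the pair $(n,\lambda)$, and then specializes to $\F=\F_1$ (where $\lambda=1$ is forced and every $n\geq1$ occurs). Your direct argument for $\F_1$ via the image $t\in K$ of $T$ is more elementary and avoids that bookkeeping; the paper's detour through general $\F$ is what it needs later for \autoref{Smirnov projective line}.
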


\subsection*{Compactification of \texorpdfstring{$\Spec \mathcal\Z$}{Spec Z}} In \autoref{compactification}, we construct the compactification of $\Spec \Z$ as follows: by forgetting the addition the structural sheaf of $\Spec \Z$, one has the monoidal space $(\Spec \Z)^\bullet$. As a topological space, $\overline{\Spec \Z}$ is the disjoint union $(\Spec \Z) \sqcup \{\infty\}$, with cofinite topology on $(\Spec \Z \backslash\{0\}) \sqcup \{\infty\}$ and generic point $\{0\}$. The sheaf $\mathcal{O}_{\overline{\Spec \Z}}$ is given by 
\[
(\overline{\Spec \Z}) \backslash\{\infty\} \simeq (\Spec \Z)^\bullet \quad \text{and} \quad \mathcal{O}_{\overline{\Spec \Z}, \infty} := [-1, 1] \cap \Q.
\]
It comes with a canonical injection $j: \overline{\Spec \Z}^\textup{Smi} \rightarrow \overline{\Spec \Z}$ whose image is the set of non-generic points of $\overline{\Spec \Z}$.


\subsection*{Maps from \texorpdfstring{$\overline{\Spec \Z}$}{the compactification of Spec Z} to \texorpdfstring{$\SCong \PL_{\F_1}^1$}{the projective line over F1}}

Given $q \in \Q^*\backslash \{1, -1\}$, we consider the morphism of pointed monoids 
\[
\begin{array}{cccc}
\sigma_q: &\F_1[T] &\longrightarrow &(\Z[q])^\bullet\\
          & T & \longmapsto & q,
\end{array}
\]
where $(\Z[q])^\bullet$ is the multiplicative pointed monoid of $\Z[q]$. It induces a continuous map $\sigma_q^*: \Spec \Z[q] \rightarrow \SCong \F_1[T]$ that sends a prime ideal $\mathfrak{p}$ to the strong prime congruence 
\[
\sigma_q^*(\mathfrak{p}):= \big\{(a,b)\in \F_1[T] \times \F_1[T] \; \big| \; \overline{\sigma(a)} = \overline{\sigma(b)} \text{ in } \Z[q] /\mathfrak{p}\big\}.
\]

The next result shows how the map $\varphi_q$ fits in this context (see \autoref{extension to the compactification}).

\begin{thmA}
There exists a continuous map $\check{q}: \overline{\Spec \Z} \rightarrow \SCong \PL^1_{\F_1}$ that extends $\sigma_q^*$ and such that the diagram 
\[
\begin{tikzcd}
\overline{\Spec \Z}^\textup{Smi} \arrow{r}{\varphi_q} \arrow[d, swap, "j"] & \PL_{\F_1}^{1, \textup{Smi}}  \arrow[d, "i"]\\
\overline{\Spec \Z} \arrow[r, swap, "\check{q}"] & \SCong \PL_{\F_1}^1
\end{tikzcd}
\]
commutes.
\end{thmA}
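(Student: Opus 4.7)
The plan is to use the standard affine cover of $\PL_{\F_1}^1$ by $U_0 := \Spec \F_1[T]$ and $U_\infty := \Spec \F_1[T^{-1}]$, which induces an open cover of $\SCong \PL_{\F_1}^1$ by $\SCong U_0$ and $\SCong U_\infty$. Writing $q = a/b$ with $\gcd(a,b) = 1$, one has $\Spec \Z[q] = \Spec \Z[b^{-1}]$ and $\Spec \Z[q^{-1}] = \Spec \Z[a^{-1}]$, both open in $\overline{\Spec\Z}$ (their complements are finite subsets of closed points together with $\{\infty\}$). Since $\gcd(a,b) = 1$, they cover $\Spec\Z$.

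I would then define $\check q$ in three steps. On $\Spec \Z[q]$, set $\check q := \sigma_q^*$, which takes values in $\SCong U_0$. On $\Spec \Z[q^{-1}]$, use the analogous morphism $\sigma_{q^{-1}} \colon \F_1[T^{-1}] \to (\Z[q^{-1}])^\bullet$, $T^{-1} \mapsto q^{-1}$, whose induced map lands in $\SCong U_\infty$. On the overlap $\Spec \Z[(ab)^{-1}]$, both maps are determined by the multiplicative rule $T^n \mapsto q^n$ modulo the prescribed prime, and thus agree once $\SCong U_0$ and $\SCong U_\infty$ are identified along the open inclusion of strong congruence spaces induced by $U_0 \cap U_\infty \hookrightarrow U_0, U_\infty$. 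Finally, since $q \neq \pm 1$, exactly one of $q, q^{-1}$ lies in $\mathcal O_{\overline{\Spec\Z}, \infty} = [-1, 1] \cap \Q$; set $\check q(\infty)$ to be the closed point $[0]$ of $\SCong U_0$ if $|q| < 1$, and the closed point of $\SCong U_\infty$ (which corresponds to $[\infty] \in \PL_{\F_1}^{1, \textup{Smi}}$) if $|q| > 1$.

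Continuity at $\infty$ reduces, under the cofinite topology of $\overline{\Spec\Z}$, to showing that for any fixed $n \geq 1$ only finitely many primes $p$ are sent to $[n]$; this holds because $\sigma_q^*([p]) = [n]$ forces $p \mid a^n - b^n$, a nonzero integer. Commutativity of the diagram is checked pointwise against the explicit formula for $\varphi_q$ from the introduction: for $p \nmid ab$, the strong prime congruence $\sigma_q^*([p])$ identifies $T^n$ with $1$ precisely when $p \mid a^n - b^n$, giving the image $[\mathrm{ord}(\overline{a}\overline{b}^{-1})]$; the remaining cases $p \mid a$, $p \mid b$, and $[\infty]$ match by construction. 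The main obstacle, I expect, is the gluing step, which rests on a functoriality property of $\SCong$ turning the open inclusions $U_0 \cap U_\infty \hookrightarrow U_0, U_\infty$ into open embeddings of strong congruence spaces — a property that must be in place before the local pieces can be assembled into a single continuous map $\check q$.
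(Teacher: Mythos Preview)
Your approach is correct and essentially the same as the paper's proof of \autoref{extension to the compactification}: both cover $\PL_{\F_1}^1$ by the affine charts $D_+(Y)$ and $D_+(X)$, define $\check{q}$ chartwise via $T\mapsto q$ and $T^{-1}\mapsto q^{-1}$, and extend to the archimedean place by the sign of $|q|-1$. The gluing you flag as the main obstacle is already built into the $\SCong$ formalism (open covers go to open covers), and your finiteness argument for continuity at $\infty$ in fact supplies a detail the paper simply asserts.
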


\subsection*{Projective line over algebraic extensions of \texorpdfstring{$\F_1$}{F1}} In \cite{Smirnov93}, Smirnov also introduces the projective line over certain ``algebraic extensions of the field with one element''.

Let $\mu_\infty$ be the group of complex roots of unity $\{ z \in \C \mid z^n = 1 \text{ for some } n \geq 1\}$ and define $\F_{1^\infty} := \mu_\infty \cup \{0\}$.

The set of ``geometric points'' of $\PL^1$ is
\[
\PL^{1, \textup{Smi}}(\F_{1^\infty}) := \F_{1^\infty} \sqcup \{\infty\}. 
\]
The Galois group $G := \textup{Gal}\big(\Q(\mu_\infty) / \Q \big)$ acts on $\F_{1^\infty}$, and hence on $\PL^{1, \textup{Smi}}(\F_{1^\infty})$, with trivial action on $\infty$. For each subgroup $\Gamma$ of $G$, a ``schematic point'' in $\PL^{1, \textup{Smi}}_\Gamma$ is an orbit of the action
\[
\Gamma \times \PL^{1, \textup{Smi}}(\F_{1^\infty}) \longrightarrow \PL^{1, \textup{Smi}}(\F_{1^\infty}),
\]
{i.e.}, Smirnov defines $\PL^{1, \textup{Smi}}_\Gamma := \PL^{1, \textup{Smi}}(\F_{1^\infty}) / \Gamma$.

\begin{propA}[\autoref{Smirnov's projective line as strong congruence space F1infty}]
\label{geometric points - Smirnov}
There is a canonical inclusion 
\[
i_\infty: \PL^{1, \textup{Smi}}(\F_{1^\infty}) \rightarrow \SCong_{\F_{1^\infty}} \PL_{\F_{1^\infty}}^1 
\]
whose image consists of all non-generic points of $\SCong_{\F_{1^\infty}} \PL_{\F_{1^\infty}}^1$.
\end{propA}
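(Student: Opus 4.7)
The plan is to follow the same strategy used to prove the previous Theorem A, now over the base $\F_{1^\infty}$ instead of $\F_1$. The projective line $\PL_{\F_{1^\infty}}^1$ is covered by two affine charts $\MSpec \F_{1^\infty}[T]$ and $\MSpec \F_{1^\infty}[T^{-1}]$, and $\SCong_{\F_{1^\infty}}\PL^1_{\F_{1^\infty}}$ inherits the corresponding open cover. I would define $i_\infty$ chart by chart: on the first chart send $\zeta\in \F_{1^\infty}$ to the kernel congruence of the $\F_{1^\infty}$-linear evaluation $\textup{ev}_\zeta\colon \F_{1^\infty}[T]\to \F_{1^\infty}$ that sends $T\mapsto \zeta$; on the second chart send $\infty$ to the kernel of the evaluation $T^{-1}\mapsto 0$. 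For $\zeta \in \mu_\infty$ both charts produce the same congruence on the overlap, so these assignments glue to a well-defined map $i_\infty$. Injectivity is then immediate: two distinct roots of unity $\zeta\neq\zeta'$ are separated by the pair $(T,\zeta)$, which belongs to $i_\infty(\zeta)$ but not to $i_\infty(\zeta')$; the point $0$ is characterised in the first chart by $T\sim 0$; and $\infty$ is characterised in the second chart by $T^{-1}\sim 0$.

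The main content is the surjectivity onto non-generic points. I would classify the strong prime congruences $\sim$ on $\F_{1^\infty}[T]$ for which $\F_{1^\infty}\hookrightarrow \F_{1^\infty}[T]/\sim$ remains injective. By the defining property of strong prime congruences as kernels of morphisms into multiplicative monoids of fields, such a congruence arises from a map of pointed monoids $\F_{1^\infty}[T]\to F^\bullet$ into some field $F\supseteq \F_{1^\infty}$, and is thus determined by the image $\alpha\in F$ of $T$. If $\alpha=0$, we recover the congruence $i_\infty(0)$; if $\alpha\in \mu_\infty\subseteq \F_{1^\infty}$, the congruence is $i_\infty(\alpha)$; and otherwise no non-trivial monomial relation $\zeta T^n \sim \zeta' T^m$ can hold, since such a relation would force $\alpha^{n-m}\in \mu_\infty$ in $F$, making $\alpha$ itself a root of unity. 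In the remaining case the congruence is trivial and gives the generic point. The chart at $\infty$ is treated symmetrically, and the only new non-generic point found there is $\infty$ itself.

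The step I expect to be the main obstacle is pinning down the classification of strong prime congruences on $\F_{1^\infty}[T]$ precisely, in particular ruling out quotients that embed into a field $F$ containing elements algebraic over $\F_{1^\infty}$ but lying outside $\F_{1^\infty}$. The key observation, which makes the argument close cleanly without any appeal to additive structure, is that $\mu_\infty$ is closed under taking arbitrary roots, so any element $\alpha\in F$ satisfying $\alpha^n\in \mu_\infty$ for some $n\geq 1$ already lies in $\mu_\infty\subseteq \F_{1^\infty}$. This is precisely what guarantees that every non-generic strong prime congruence in $\SCong_{\F_{1^\infty}}\PL_{\F_{1^\infty}}^1$ is realised by a geometric point of $\PL^{1,\textup{Smi}}(\F_{1^\infty})$.
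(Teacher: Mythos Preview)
Your argument is correct. The paper takes a different, more general route: it first proves a classification (\autoref{description projective line}) of $\SCong_\F \PL_\F^1$ for an arbitrary pointed submonoid $\F\subseteq \F_{1^\infty}$, showing that the non-generic points are $[0]$, $[\infty]$, and the congruences $\langle (X/Y)^n\sim\lambda\rangle$ with $\lambda\in\F^\times$ admitting no $d$-th root in $\F$ for any $d>1$ dividing $n$. The statement for $\F_{1^\infty}$ then drops out as an immediate corollary, because every $\lambda\in\mu_\infty$ has all roots in $\mu_\infty$, forcing $n=1$. The key step in the paper's proof of \autoref{description projective line} uses the \emph{intrinsic} characterization of domains (integral and at most $n$ solutions to $x^n=\alpha$) to rule out $n>1$ by exhibiting too many $d$-th roots. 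Your approach instead works directly over $\F_{1^\infty}$ and uses the \emph{extrinsic} characterization of strong prime congruences as kernels of maps into fields; the same root-closure of $\mu_\infty$ enters, but now as the statement that a field containing $\mu_\infty$ has no further roots of unity. The paper's route buys the uniform description of $\widetilde{\PL}^1_\F$ for all $\F$ (needed later for $\PL^{1,\textup{Smi}}_\Gamma$), while yours is shorter for this single case. One small point worth making explicit in your write-up: the reason every root of unity of $F$ already lies in the image of $\mu_\infty$ is not just that $\mu_\infty$ is root-closed abstractly, but that $\mu_\infty$ already supplies $k$ distinct $k$-th roots of unity in $F$ for each $k$, and a field has at most $k$ of them.
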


Given a subgroup $\Gamma$ of $G$, we define
\[
\F_\Gamma := \{\lambda \in \F_{1^\infty} \mid h \cdot \lambda = \lambda \text{ for all } h \in \Gamma\}.
\]
The inclusion $\F_\Gamma \hookrightarrow \F_{1^\infty}$ induces a surjective continuous map 
\[
\Phi_{\F_\Gamma}: \SCong_{\F_{1^\infty}} \PL_{\F_{1^\infty}}^1 \rightarrow \SCong_{\F_\Gamma} \PL_{\F_\Gamma}^1.
\]

The next result, which follows from \autoref{Smirnov projective line} and \autoref{quotient of projective line}, shows how we recover Smirnov's projective lines by using strong congruence spaces.

\begin{thmA}
If $\Gamma$ is a closed subgroup of $\textup{Gal}\big(\Q(\mu_\infty) / \Q \big)$, then there exists an injective map $j_\Gamma: \PL^{1, \textup{Smi}}_\Gamma \rightarrow \SCong_{\F_\Gamma} \PL_{\F_\Gamma}^1$ whose image consists of all non-generic points of $\SCong_{\F_\Gamma} \PL_{\F_\Gamma}^1$ and such that the diagram
\[
\begin{tikzcd}
\PL^{1, \textup{Smi}}(\F_{1^\infty}) \arrow{r}{} \arrow[d, swap, "i_\infty"] & \PL^{1, \textup{Smi}}_\Gamma  \arrow[d, "j_\Gamma"]\\
\SCong_{\F_{1^\infty}} \PL_{\F_{1^\infty}}^1 \arrow[r, swap, "\Phi_{\F_\Gamma}"] & \SCong_{\F_\Gamma} \PL_{\F_\Gamma}^1
\end{tikzcd}
\]
commutes.
\end{thmA}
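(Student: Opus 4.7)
The plan is to construct $j_\Gamma$ as the unique factorization of $\Phi_{\F_\Gamma} \circ i_\infty$ through the quotient $\PL^{1,\textup{Smi}}_\Gamma = \PL^{1,\textup{Smi}}(\F_{1^\infty})/\Gamma$, and then to identify its image with the non-generic points of $\SCong_{\F_\Gamma} \PL^1_{\F_\Gamma}$. This places the commutativity of the required square at the foundation of the construction rather than treating it as a separate verification.

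First I would make the composite $\Phi_{\F_\Gamma} \circ i_\infty$ explicit. Given $\lambda \in \F_{1^\infty} \cup \{\infty\}$, the point $i_\infty(\lambda)$ is the strong congruence on the standard chart of $\PL^1_{\F_{1^\infty}}$ cut out by evaluation at $\lambda$, namely the set of pairs $(p,q) \in \F_{1^\infty}[T] \times \F_{1^\infty}[T]$ with $p(\lambda) = q(\lambda)$ (with the usual adaptation at $\infty$). Applying $\Phi_{\F_\Gamma}$, which is pullback along $\F_\Gamma[T] \hookrightarrow \F_{1^\infty}[T]$, yields the analogous congruence on $\F_\Gamma[T]$. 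The composite is then manifestly $\Gamma$-invariant: for $h \in \Gamma$ and $p, q \in \F_\Gamma[T]$ one has $p(h \cdot \lambda) = h \cdot p(\lambda) = h \cdot q(\lambda) = q(h \cdot \lambda)$, since $h$ fixes the coefficients of $p$ and $q$. This yields the desired factorization $j_\Gamma$ and makes the square commute tautologically.

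For surjectivity onto the non-generic points, I would adapt the argument that establishes \autoref{geometric points - Smirnov} to the $\F_\Gamma$-setting: a non-generic strong congruence on $\PL^1_{\F_\Gamma}$ arises as the kernel of a morphism from $\F_\Gamma[T]$ into a field $K$, and since $\F_\Gamma \subseteq \F_{1^\infty} \subseteq \C$, after passing to an algebraic closure I may arrange that $T$ maps to a root of unity or to $\infty$, exhibiting the congruence as $\Phi_{\F_\Gamma}(i_\infty(\lambda))$ for some $\lambda \in \F_{1^\infty} \cup \{\infty\}$.

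The hard part will be the injectivity of $j_\Gamma$, and this is where the hypothesis that $\Gamma$ is \emph{closed} becomes essential. Suppose $\Phi_{\F_\Gamma}(i_\infty(\lambda)) = \Phi_{\F_\Gamma}(i_\infty(\lambda'))$ for $\lambda, \lambda' \in \F_{1^\infty}$; I must show $\lambda' \in \Gamma \cdot \lambda$. Reading off the least positive integer $n$ for which there exists $\zeta \in \F_\Gamma$ with $(T^n, \zeta)$ in the pulled-back congruence recovers both the multiplicative order of $\lambda$ and the value $\zeta = \lambda^n$, so $\lambda$ and $\lambda'$ satisfy the same equation $T^n - \zeta$ over the field generated by $\F_\Gamma$ inside $\Q(\mu_\infty)$. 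Closedness of $\Gamma$ will now activate the infinite Galois correspondence for $\Q(\mu_\infty)/\Q$, which identifies this field with $\Q(\mu_\infty)^\Gamma$ and guarantees that the roots of an irreducible polynomial over it form a single $\Gamma$-orbit. If the forthcoming results \autoref{Smirnov projective line} and \autoref{quotient of projective line} are framed as expected (the first supplying the parametrization of non-generic points in the $\F_\Gamma$-case, the second stating that $\Phi_{\F_\Gamma}$ realizes $\SCong_{\F_\Gamma} \PL^1_{\F_\Gamma}$ as the topological $\Gamma$-quotient of $\SCong_{\F_{1^\infty}} \PL^1_{\F_{1^\infty}}$), the argument above reduces to assembling these two inputs via the factorization described in the first two paragraphs.
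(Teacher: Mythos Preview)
Your approach is essentially the paper's: factor $\Phi_{\F_\Gamma}\circ i_\infty$ through the $\Gamma$-quotient using $\Gamma$-invariance, obtain surjectivity onto non-generic points from the explicit description of $\SCong_{\F_\Gamma}\PL^1_{\F_\Gamma}$, and obtain injectivity from the infinite Galois correspondence (closedness of $\Gamma$ gives $\Gamma=\textup{Gal}(\Q(\mu_\infty)/\Q(\F_\Gamma))$, so conjugate roots lie in the same $\Gamma$-orbit). The paper organizes this as $j_\Gamma = b_\Gamma\circ p_\Gamma$, where $b_\Gamma:\widetilde{\PL}^1_{\F_{1^\infty}}/\Gamma\to\widetilde{\PL}^1_{\F_\Gamma}$ is the bijection of \autoref{Smirnov projective line} and $p_\Gamma$ is the quotient of $i_\infty$; your direct factorization amounts to the same composite.

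Two small corrections. First, the integer $n$ you recover is $\textup{ord}_{\F_\Gamma}(\lambda)=\min\{w\geq 1\mid \lambda^w\in\F_\Gamma\}$, not the multiplicative order of $\lambda$ in $\mu_\infty$; this is what the paper uses, and it is what matters for the Galois step. Second, your guess that one of the forward references asserts $\Phi_{\F_\Gamma}$ realizes a \emph{topological} $\Gamma$-quotient is wrong: the paper explicitly remarks that $b_\Gamma$ need not be a homeomorphism (e.g.\ for $\Gamma=G$). Only the set-theoretic bijection is claimed, and that is all the theorem needs.
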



\subsection*{Acknowledgements}The author thanks Oliver Lorscheid for several conversations and for his help with preparing this text. The author also thanks Eduardo Vital for useful conversations. The present work was carried out with the support of CNPq, National Council for Scientific and Technological Development - Brazil.

\section{Monoid schemes}

In this section we recall the theory of monoid schemes. For more details, see \cite{Deitmar05}, \cite{Connes-Consani10}, \cite{Chuetal12} and \cite{Cortinasetal15}.

\subsection{Basic definitions}
A \textit{pointed monoid} is a set $A$ equipped with an associative binary operation
\begin{align*}
A \times A & \rightarrow A\\
(a, b) & \mapsto a\cdot b,
\end{align*}
called \textit{multiplication} or \textit{product}, such that $A$ has an element $0$, called \textit{absorbing element} or \textit{zero}, and an element $1$, called \textit{one}, satisfying $0\cdot a = a \cdot 0 = 0$ and $1 \cdot a = a \cdot 1 = a$ for all $a$ in $A$. We sometimes use $ab$ to denote $a \cdot b$. A pointed monoid is \textit{commutative} if $a \cdot b = b \cdot a$ for all $a, b$ in $A$. Throughout this text all monoids are commutative. A pointed monoid is \textit{integral} (or \textit{cancellative}) if $ab = ac$ implies $a = 0$ or $b = c$. A pointed monoid is \textit{without zero divisors} if $ab = 0$ implies $a = 0$ or $b = 0$. Note that if a pointed monoid is integral, then it is without zero divisors.

An \textit{ideal} of $A$ is a set $I \subseteq A$
such that $0 \in I$ and $ax \in I$ for all $a \in A$ and $x \in I$. Every ideal $I$ induces an equivalence relation $\sim$ on $A$ generated by $\{(0, x) \in A\times A \mid x \in I\}$. The quotient set $A / I := A / \sim$ is a pointed monoid with operation given by $[a] \cdot [b]:=[ab]$, absorbing element $[0]$ and one $[1]$. The ideal $I$ is \textit{prime} if $A/I$ is without zero divisors. 

The \textit{ideal generated} by a subset $E \subseteq A$ is the intersection $\langle E \rangle$ of all ideals of $A$ that contain $E$, which is the smallest ideal containing $E$. We use $\langle a_i \mid i \in I \rangle$ to denote the ideal $\langle \{a_i\}_{i \in I} \rangle$.

A \textit{morphism} of pointed monoids is a multiplicative map preserving zero and one. We use $\F_1$ to denote the initial object $\{0, 1\}$ of the category $\mathcal{M}_0$ of pointed monoids. 

An element $u \in A$ is \textit{invertible} if there exists $y \in A$ such that $uy = 1$. We denote the set of invertible elements of $A$ by $A^\times$. The set $A \backslash A^\times$ is a prime ideal and contains every proper ideal of $A$. A morphism of pointed monoids $f:A \rightarrow B$ is \textit{local} if $f^{-1}(B^\times) = A^\times$. A \textit{pointed group} is a pointed monoid $A$ such that $A^\times = A \backslash\{0\}$. 

If $f: A \rightarrow B$ is a morphism and $I$ is an ideal of $B$, then the set 
\[
f^*(I):= \{a \in A \mid f(a) \in I\}
\]
is an ideal of $A$, called \textit{pullback of $I$ along $f$}. If $I$ is prime, then $f^*(I)$ is also prime.


A \textit{multiplicative subset} of $A$ is a multiplicatively closed subset $S\subseteq A$ that contains $1$. We define the \textit{localization} $S^{-1}A$ in the same way as in the case of rings. The \textit{group of fractions} of a pointed monoid without zero divisors $A$ is the localization $\textup{Frac}(A) := (A\backslash\{0\})^{-1} A$. It is a pointed group and the natural map $A \rightarrow \textup{Frac}(A)$ is injective.

\subsection{The geometry of monoids}
We use $\MSpec A$ to denote the set of prime ideals of the pointed monoid $A$. For $h \in A$, we denote the set $\{I \in \MSpec A \mid h \notin I\}$ by $U_h$. We endow $\MSpec A$ with the topology generated by the basis $\{U_a \mid a \in A\}$, and with the sheaf of pointed monoids $\mathcal{O}_{\MSpec A}$ characterized by $\Gamma(\mathcal{O}_{\MSpec A}, U_h) = A[h^{-1}]$ for $h \in A$, with restriction maps 
\[\def\arraystretch{1.5}
\begin{array}{ccc}
\Gamma(\mathcal{O}_{\MSpec A}, U_h) & \longrightarrow & \Gamma(\mathcal{O}_{\MSpec A}, U_{gh})\\
\dfrac{a}{h} & \longmapsto & \dfrac{ga}{gh}.
\end{array}
\]

A \textit{monoidal space} is a pair $(X, \mathcal{O}_X)$ where $X$ is a topological space and $\mathcal{O}_X$ is a sheaf of pointed monoids on $X$. A \textit{morphism} of monoidal spaces $(X, \mathcal{O}_X) \rightarrow (Y, \mathcal{O}_Y)$ is a pair $(\varphi, \varphi^\#)$ where $\varphi: X \rightarrow Y$ is continuous and $\varphi^\#: \mathcal{O}_Y \rightarrow \varphi_*\mathcal{O}_X$ is a morphism of sheaves of pointed monoids such that the induced map between stalks $\varphi^\#_x: \mathcal{O}_{Y, \varphi(x)} \rightarrow \mathcal{O}_{X, x}$ is local for every $x \in X$. 

An \textit{affine monoid scheme} is a monoidal space isomorphic to $(\MSpec A, \mathcal{O}_{\MSpec A})$ for some pointed monoid $A$. A \textit{monoid scheme} is a monoidal space that has an open covering by affine monoid schemes.

\subsection{Base extension to rings}
\label{base extension to rings}
Throughout this text all rings are commutative and with unit. Let $A$ be a pointed monoid. Define $A_\Z$ as the ring $\Z[A]/\langle 1_\Z0_A \rangle$. The assignment $A \mapsto A_\Z$ extends to a functor from the category of pointed monoids to rings, which is left-adjoint to the forgetful functor $B\mapsto B^\bullet$ from rings to pointed monoids. For an affine monoid scheme $U = \MSpec A$, we use $U_\Z$ to denote the affine Grothendieck scheme $\Spec (A_\Z)$. We also define a functor $Y \mapsto Y^\bullet$ from schemes to monoidal spaces by simply forgetting the additive structure of $\mathcal{O}_Y$.

Let $X$ be a monoid scheme. Let $\mathcal{U}_X$ be the category of all affine open subschemes of $X$, with inclusions as morphisms. We define the \textit{base extension of $X$ to $\Z$} (or \textit{$\Z$-realization of $X$}) as the Grothendieck scheme
\[
X_\Z := \underset{U \in \mathcal{U}_X}{\textup{colim}} \; U_\Z.
\]
If $\{U_i\}_{i\in I}$ is an affine covering of a monoid scheme $X$, then $\{U_{i,\Z}\}_{i\in I}$ is an affine covering of $X_\Z$. If $U, V$ are affine open subsets of $X$, then $U_\Z \cap V_\Z = (U\cap V)_\Z$ in $X_\Z$ (\textit{cf.} \cite[Cor.\ 3.3]{Chuetal12} and \cite[Section 5]{Cortinasetal15}).

\begin{prop}
\label{proj from realization to monoid scheme}
There exists a morphism of monoidal spaces $p: X_\Z^\bullet \rightarrow X$, functorial in $X$, such that if $X = \MSpec A$ is affine, then 
\[
\begin{array}{cccc}
p: & (\Spec A_\Z)^\bullet & \longrightarrow & \MSpec A \\
& Q & \longmapsto &  Q \cap A
\end{array}
\]
and $p^\#$ is induced by $A \hookrightarrow A_\Z$.
\end{prop}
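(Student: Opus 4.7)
My plan is to define $p$ first on affine charts, verify it is a well-defined morphism of monoidal spaces, and then glue over an affine covering of $X$. Functoriality in $X$ is checked at the end.

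In the affine case $X = \MSpec A$, the natural injection $\iota: A \hookrightarrow A_\Z^\bullet$ (sending $a$ to the class of $1 \cdot a$) induces the set-theoretic map $p: Q \mapsto \iota^{-1}(Q) = Q \cap A$. That $Q \cap A$ is a prime ideal of the pointed monoid $A$ is routine: it contains $0_A$, it absorbs multiplication by $A$, and $A/(Q \cap A)$ embeds into the multiplicative monoid of the domain $A_\Z / Q$, so it has no zero divisors. Continuity follows from the identity $p^{-1}(U_h) = D(h)$ for $h \in A$. The sheaf morphism $p^\#$ is specified on the basis $\{U_h\}$ by the natural monoid map $A[h^{-1}] \to (A_\Z[h^{-1}])^\bullet = \Gamma(\mathcal{O}_{\Spec A_\Z}^\bullet, D(h))$ induced by $\iota$, which is visibly compatible with the restriction maps along $U_{gh} \subseteq U_h$. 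Locality of each stalk morphism $A_{Q \cap A}^\bullet \to (A_\Z)_Q^\bullet$ amounts to the observation that $\iota$ sends $A \setminus (Q \cap A)$ into $A_\Z \setminus Q$, so units are pulled back to units.

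For a general monoid scheme $X$, fix an affine open covering $\{U_i\}_{i \in I}$. The affine construction produces morphisms $p_i: U_{i,\Z}^\bullet \to U_i$ of monoidal spaces. To assemble these into a global $p: X_\Z^\bullet \to X$, I invoke the equality $U_{i,\Z} \cap U_{j,\Z} = (U_i \cap U_j)_\Z$ recalled just above the statement. Covering $U_i \cap U_j$ by affine opens $W$ and comparing $p_i|_{W_\Z}$ and $p_j|_{W_\Z}$ with the affine $p_W$ associated to $W$ by the naturality of the affine construction, both restrictions agree with $p_W$, hence with each other, so the $p_i$ patch into a morphism $p: X_\Z^\bullet \to X$. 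Functoriality in $X$ reduces to the affine situation: for $\varphi: A \to B$ of pointed monoids, the identity $\varphi_\Z^{-1}(Q) \cap A = \varphi^{-1}(Q \cap B)$ handles the point-level square, and the sheaf level commutes by naturality of the adjunction $(-)_\Z \dashv (-)^\bullet$.

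The main obstacle is the gluing step: one must check that $p_i$ and $p_j$ agree as morphisms of monoidal spaces on their overlap, not merely set-theoretically. The key input is exactly the equality $U_{i,\Z} \cap U_{j,\Z} = (U_i \cap U_j)_\Z$ inside $X_\Z$; once granted, the verification localizes onto any principal open $W = \MSpec A[h^{-1}] \subseteq U_i \cap U_j$, where both $p_i|_{W_\Z}$ and $p_j|_{W_\Z}$ reduce to the single morphism $\Spec A_\Z[h^{-1}] \to \MSpec A[h^{-1}]$ constructed in the affine step, so both point and sheaf data patch consistently.
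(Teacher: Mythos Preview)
Your argument is correct. The paper does not actually prove this proposition in-line: its entire proof is the citation ``See \cite[Prop.\ 1.1]{Jarra23}.'' What you have written is precisely the kind of direct verification that citation defers---the affine case via the unit $A \hookrightarrow A_\Z^\bullet$ of the adjunction, gluing using $(U_i \cap U_j)_\Z = U_{i,\Z} \cap U_{j,\Z}$, and functoriality from naturality---so there is nothing to compare beyond noting that you supply the details the paper outsources. One tiny remark on phrasing: for locality of the stalk map you want the implication ``image a unit $\Rightarrow$ source a unit,'' i.e.\ $\iota(a)\notin Q \Rightarrow a\notin Q\cap A$, which is immediate from the definition of $Q\cap A$; your sentence states the converse direction, though of course both hold here.
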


\begin{proof}
See \cite[Prop. 1.1]{Jarra23}.
\end{proof}

\section{Strong congruence spaces}

We explain the construction of strong congruence spaces introduced by the author in \cite{Jarra23}, which is based on the congruence space of Lorscheid and Ray (see \cite{Lorscheid-Ray23}).

Let $A$ be a pointed monoid. A \textit{congruence} on $A$ is an equivalence relation $\mathfrak{c} \subseteq A\times A$ such that $(ab,ac) \in \mathfrak{c}$ whenever $(b,c) \in \mathfrak{c}$ and $a \in A$. The \textit{null ideal} of $\mathfrak{c}$ is the set $I_\mathfrak{c} := \{ a \in A \mid (a,0) \in \mathfrak{c}\}$.

The \textit{congruence generated} by a subset $E \subseteq A\times A$ is the intersection $\langle E \rangle$ of all congruences on $A$ that contain $E$, which is the smallest congruence containing $E$. The \textit{trivial congruence} is $\langle \emptyset \rangle = \{(a, b) \in A \times A \mid a=b\}$.

The quotient set $A / \mathfrak{c}$ is a pointed monoid with product $[a] \cdot [b]:=[ab]$, absorbing element $[0]$ and one $[1]$. The congruence $\mathfrak{c}$ is \textit{prime} if $A / \mathfrak{c}$ is integral. If $\mathfrak{c}$ is prime, then $I_\mathfrak{c}$ is a prime ideal.

Let $f: A \rightarrow B$ be a  morphism of pointed monoids and $\mathfrak{d}$ a congruence on $B$. The \textit{pullback of $\mathfrak{d}$ along $f$} is the set
\[
f^*(\mathfrak{d}):= \{(x, y) \in A \times A \mid (f(x), f(y)) \in \mathfrak{d} \},
\]
which is a congruence on $A$. The \textit{congruence kernel} of $f$ is $\textup{congker}(f) := f^*(\mathfrak{d}_{\textup{triv}})$, where $\mathfrak{d}_{\textup{triv}}$ is the trivial congruence on $B$.

\begin{notation}
\emph{If $\mathfrak{c}$ is a congruence, we sometimes use $a\sim_\mathfrak{c} b$ or $a \sim b$ instead of $(a, b) \in \mathfrak{c}$. We also use $\langle a_i \sim b_i \mid i \in I \rangle$ to denote the congruence $\langle \{(a_i, b_i)\}_{i \in I} \rangle$.}
\end{notation}

\begin{df}
A \textit{domain} is a pointed monoid $A$ that is isomorphic to a pointed submonoid of $K^\bullet$ for some field $K$. A \textit{strong} prime congruence on $A$ is a congruence $\mathfrak{c}$ such that $A/ \mathfrak{c}$ is a domain. We denote the set of strong prime congruences on A by $\SCong A$. 
\end{df}

\begin{prop}
Let $A$ be a pointed monoid. Then the following are equivalent:
\begin{enumerate}
    \item $A$ is a domain;
    \item $A$ is isomorphic to a pointed submonoid of $K^\bullet$ for some field $K$ of characteristic zero;
    \item $A$ is integral and $\#\{x\in A|\, x^n = \alpha \}\leq n$ for every $\alpha$ in $A$ and $n \geq 1$.
\end{enumerate}
\end{prop}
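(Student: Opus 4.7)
My plan is as follows. The implication (ii)$\Rightarrow$(i) is tautological, and (i)$\Rightarrow$(iii) follows from two standard field facts: cancellation in $K$ transfers to any pointed submonoid, and the polynomial $X^n - \alpha$ has at most $n$ roots in a field, so its roots lying in $A$ also number at most $n$. All the content lies in (iii)$\Rightarrow$(ii), and my strategy is to construct an explicit characteristic zero field into which $A$ embeds.

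Since $A$ is integral by (iii), I form the pointed group $G := \textup{Frac}(A) \setminus \{0\}$, into which $A\setminus\{0\}$ embeds. The first key step is to transfer the $n$-th root bound from $A$ to $G$: given distinct $g_1, \dotsc, g_m \in G$ with $g_i^n = 1$, clearing denominators produces distinct elements $a_1', \dotsc, a_m' \in A$ and a common $s \in A\setminus\{0\}$ with $g_i = a_i'/s$ and $(a_i')^n = s^n$ in $A$, so (iii) forces $m \leq n$. Hence $|G[n]| \leq n$ for every $n \geq 1$. By the structure theorem for finite abelian groups, any finite subgroup $H \leq G$ with $|H[n]| \leq n$ for all $n$ must be cyclic, so the torsion subgroup $T \leq G$ is locally cyclic and therefore embeds in $\Q/\Z$.

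Next I would pass to the divisible hull $G^d \supseteq G$. Its torsion subgroup is the divisible hull $T^d$ of $T$, which sits inside $\Q/\Z$ as a divisible subgroup. Since divisible abelian groups are injective $\Z$-modules, the short exact sequence $0 \to T^d \to G^d \to G^d/T^d \to 0$ splits, yielding $G^d \cong T^d \oplus V$ for some $\Q$-vector space $V$. Fix a $\Q$-basis $\{e_j\}_{j \in J}$ of $V$ and let $K$ be the algebraic closure of $\Q(\mu_\infty)(\{t_j\}_{j \in J})$ for algebraically independent transcendentals $t_j$. The roots of unity in $K^\times$ contain a copy of $\Q/\Z \supseteq T^d$, while the rational powers $t_j^q$ (for $q \in \Q$) assemble into a divisible torsion-free subgroup of $K^\times$ onto which $V$ maps by $q \cdot e_j \mapsto t_j^q$. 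Combined, these give an injection $G^d \hookrightarrow K^\times$, whence $A \hookrightarrow G^d \hookrightarrow K^\bullet$ after sending $0$ to $0$.

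The main technical point I anticipate is verifying that the map $T^d \oplus V \to K^\times$ is genuinely injective — equivalently, that the subgroup of $K^\times$ generated by $\mu_\infty$ and $\{t_j^q \mid j \in J,\, q \in \Q\}$ is an internal direct sum. This reduces to the assertion that no nontrivial integer monomial $\prod t_j^{m_j}$ is a root of unity, which is immediate from the algebraic independence of the $t_j$ over $\Q(\mu_\infty)$. The rest of the argument is bookkeeping with divisible hulls and splittings.
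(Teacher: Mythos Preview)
Your argument is correct. The paper itself gives no proof here—it simply cites Prop.\ 4.5 and Cor.\ 4.6 of \cite{Jarra23} and stops—so there is nothing in the present text to compare your approach against; your write-up is in fact more self-contained than what the paper offers.

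Two small points deserve a sentence each if you polish this. First, the claim that $\mathrm{Tor}(G^d) = T^d$ holds because $G$ is essential in $G^d$: any nonzero torsion element $x \in G^d$ has a nonzero multiple in $G$, hence in $T$, so $T$ is essential in the divisible group $\mathrm{Tor}(G^d)$, which is therefore its injective hull. Second, ``$t_j^q$'' should be read as a fixed choice, for each $j$, of a homomorphism $\phi_j\colon \Q \to K^\times$ with $\phi_j(1) = t_j$; such a $\phi_j$ exists because $K^\times$ is divisible, and it is injective because any nonzero $a/b$ in its kernel would force $t_j^a = \phi_j(a) = 1$, contradicting transcendence of $t_j$. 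With these clarifications the injectivity check you outline (raise to a common denominator, obtain a Laurent monomial in the $t_j$ equal to a root of unity, invoke algebraic independence) goes through, and the proof is complete.
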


\begin{proof}
It follows from \cite[Prop. 4.5 and Cor. 4.6]{Jarra23}.
\end{proof}

Let $k$ be a pointed monoid and $A$ a $k$-algebra. A \textit{$k$-congruence} on $A$ is a strong prime congruence $\mathfrak{c}$ such that the natural map $k \rightarrow A/ \mathfrak{c}$ is injective. The set of $k$-congruences of $A$, endowed with the topology generated by the open sets
\[
U_{a, b} := \{ \mathfrak{c} \mid (a, b) \notin \mathfrak{c} \},
\]
is denoted by $\SCong_k A$. Note that $\SCong A = \SCong_{\F_1} A$, and $\SCong_k A = \emptyset$ if $k$ is not a domain or if the map $k \rightarrow A$ is not injective.

There exists a functor $\SCong_k(-)$ from $k$-schemes to topological spaces such that:
\begin{enumerate}
    \item If $X = \MSpec A$ is affine, then $\SCong_k X = \SCong_k A$;
    \item If $\varphi: \MSpec B \rightarrow \MSpec A$ is a morphism of affine $k$-schemes, then \newline
    $\SCong_k \varphi = \big( \varphi^\#(\MSpec A) \big)^*: \SCong_k B \rightarrow \SCong_k A$;
    \item If $\{U_i\}_{i \in I}$ is an open covering of $X$, then $\{\SCong_k U_i\}_{i \in I}$ is an open covering of $\SCong_k X$.
\end{enumerate}

The space $\SCong_k X$ comes with a continuous map $\pi_X: \SCong_k X \rightarrow X$, defined as follows: given $\tilde{x} \in \SCong_k X$, the image of $\tilde{x}$ by $\pi_X$ is $\pi_X(\tilde{x}) = I_\mathfrak{c}$ if $U \subseteq X$ is an open affine and $\mathfrak{c}$ is a $k$-congruence on $\Gamma U$ such that 
\[
\tilde{x} = \mathfrak{c} \in \SCong_k \Gamma U \subseteq \SCong_k X.
\]
We define the pointed group $\kappa(\tilde{x}) := \textup{Frac}(\Gamma U  / \mathfrak{c})$, which does not depend on the choice of $U$.







A morphism of $k$-schemes $\varphi: X \rightarrow Y$ induces an  injective morphism of $k$-algebras
\[
\kappa\big(\SCong_k(\varphi)(\tilde{x})\big) \hookrightarrow \kappa(\tilde{x})
\]
for each  $\tilde{x} \in \SCong_k X$.

\subsection{The map \texorpdfstring{$\gamma: X_\Z \rightarrow \SCong X$}{gamma: XZ ---> SCong X}}

Let $A$ be a pointed monoid. The canonical inclusion $i: A \rightarrow A_\Z^\bullet$ induces a continuous map $i^*: \Spec (A_\Z) \rightarrow \SCong A$ that sends a prime ideal $\mathfrak{p}$ to the strong prime congruence 
\[
i^*(\mathfrak{p}):= \congker(\pi_\mathfrak{p} \circ i) = \{(a,b)\in A\times A \mid \overline{1.a} = \overline{1.b} \text{ in } A_\Z/\mathfrak{p}\},
\]
where $\pi_\mathfrak{p}: A_\Z \rightarrow A_\Z/\mathfrak{p}$ is the natural projection.

\begin{thm}
\label{factorization gamma}
Let $X$ be a monoid scheme. The underlying continuous map of the morphism of monoidal spaces $p: X_\Z^\bullet \rightarrow X$ factorizes through a continuous map 
\[
\gamma: X_\Z \rightarrow \SCong X
\]
characterized by the property that if $U$ is an open affine of $X$, then $\gamma(U_\Z) \subseteq \SCong U$ and $\gamma|_{U_\Z}: U_\Z \rightarrow \SCong U$ is induced by the inclusion $\Gamma U \rightarrow (\Gamma U_\Z)^\bullet$.

The map $\gamma$ induces an injective morphism of pointed groups $\gamma_x: \kappa(\gamma(x)) \rightarrow \kappa(x)^\bullet$ for every $x \in X_\Z$.
\end{thm}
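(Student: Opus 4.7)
The plan is to construct $\gamma$ affine-locally using the map $i^*$ recalled just before the theorem, and then glue. First, for an affine open $U = \MSpec A$ of $X$, define $\gamma_U := i^*: U_\Z = \Spec A_\Z \to \SCong A = \SCong U$, sending $\mathfrak{p}$ to the strong prime congruence $\congker(\pi_\mathfrak{p}\circ i)$ on $A$. By the discussion preceding the theorem, $\gamma_U$ is continuous, and one checks directly that $A/\gamma_U(\mathfrak{p})$ injects into the field $\textup{Frac}(A_\Z/\mathfrak{p})$, so $\gamma_U(\mathfrak{p})$ is indeed a strong prime congruence.

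Next I would verify that $\gamma_U$ refines $p$. By definition, $I_{\gamma_U(\mathfrak{p})} = \{a \in A \mid i(a) \in \mathfrak{p}\} = \mathfrak{p} \cap A = p(\mathfrak{p})$, so the composition of $\gamma_U$ with the structural map $\pi_U: \SCong U \to U$ sending a strong prime congruence to its null ideal recovers $p|_{U_\Z}$. This is the factorization through $\SCong U$ at the affine level.

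To globalize, one has to check compatibility on overlaps. If $V \subseteq U$ are affine opens with $\Gamma U = A$ and $\Gamma V = A[h^{-1}]$, then the inclusion $V_\Z \hookrightarrow U_\Z$ is $\Spec$ of $A_\Z \to A_\Z[h^{-1}]$, and by naturality of the congruence-kernel construction one has a commutative square
\[
\begin{tikzcd}
V_\Z \arrow[r, "\gamma_V"] \arrow[d, hook] & \SCong V \arrow[d, hook] \\
U_\Z \arrow[r, "\gamma_U"] & \SCong U,
\end{tikzcd}
\]
where the right vertical map is the canonical inclusion of $\SCong V$ as the open subset of $\SCong U$ obtained by inverting $h$, and where $\SCong V \subseteq \SCong U$ by the properties of $\SCong_k$ recalled above. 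Hence the $\gamma_U$, for $U$ ranging over affine opens of $X$, glue into a continuous map $\gamma: X_\Z \to \SCong X$ with the stated characterization, and the factorization of $p$ is inherited from the affine case together with the local description of $\pi_X: \SCong X \to X$.

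Finally, for the injectivity of $\gamma_x$, fix $x \in X_\Z$ and an affine open $U = \MSpec A$ with $x \in U_\Z$. Writing $\mathfrak{p} := x$ and $\mathfrak{c} := \gamma(x)$, the map $A/\mathfrak{c} \to (A_\Z/\mathfrak{p})^\bullet$ is injective by the very definition of the congruence kernel. Since $A/\mathfrak{c}$ is a domain and the target is a subgroup of $\kappa(x)^\bullet = \textup{Frac}(A_\Z/\mathfrak{p})^\bullet$, the map extends to an injective morphism of pointed groups $\gamma_x: \kappa(\gamma(x)) = \textup{Frac}(A/\mathfrak{c}) \hookrightarrow \kappa(x)^\bullet$, independent of the choice of $U$ by the colimit construction of $X_\Z$. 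The main obstacle is the gluing step: one must check carefully that $\SCong V$ really sits inside $\SCong U$ as the expected basic open and that the induced map on congruence kernels behaves functorially with respect to localization; once this is done, the rest of the argument is formal.
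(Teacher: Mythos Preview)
The paper does not give a self-contained proof of this theorem; its entire proof is the citation ``See \cite[Thm.\ 4.1 and Prop.\ 4.2]{Jarra23}.'' Your proposal is the natural argument and is essentially correct: define $\gamma$ affine-locally as $i^*$, verify the factorization $\pi_U\circ\gamma_U = p|_{U_\Z}$ by computing the null ideal, glue using functoriality of the congruence-kernel construction, and read off the injective map on residue groups from the injection $A/\mathfrak{c}\hookrightarrow (A_\Z/\mathfrak{p})^\bullet$. This is almost certainly the strategy carried out in the cited reference.

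The only place that deserves a bit more care is, as you already flag, the gluing. You verify compatibility for principal open inclusions $V=U_h\subseteq U$, whereas the colimit defining $X_\Z$ is indexed by \emph{all} affine opens and their inclusions. This is not a real gap: by the paper's remark that $U_\Z\cap V_\Z=(U\cap V)_\Z$ and that any open of an affine monoid scheme is a union of principal opens, compatibility on principal opens suffices; alternatively, one can simply observe that $\congker$ is functorial under arbitrary monoid morphisms, so the square commutes for any inclusion $V\subseteq U$ of affine opens, not just principal ones. Once that is said, everything else in your argument goes through as written.
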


\begin{proof}
See \cite[Thm.\ 4.1 and Prop.\ 4.2]{Jarra23}.
\end{proof}

\section{The projective line over algebraic extensions of \texorpdfstring{$\F_1$}{F1}}
\label{projective line over algebraic extensions of F1}

Fix a pointed submonoid of $\F \subseteq \F_{1^\infty}$ and note that $\F$ is a pointed group. The \textit{projective line over $\F$} is the monoid scheme 
\[
\PL_\F^1 = D_+(Y) \cup D_+(X),
\]
where $D_+(Y) = \MSpec \F[X/Y]$ and $D_+(X) = \MSpec \F[Y/X]$, with identifications
\[
\begin{tikzcd}[column sep=0.02em]
\MSpec \F[X/Y] & \supset & U_{X/Y} & = &  D_+(Y) \cap D_+(X) & = & U_{Y/X} & \subset & \MSpec \F[Y/X]\\
&&& & \MSpec \F[(X/Y)^\pm] \arrow[llu,"\rotatebox{150}{\(\sim\)}", swap] \arrow[rru, "\rotatebox{30}{\(\sim\)}"].& &&&
\end{tikzcd}
\]

\begin{rem}
By \cite[Ex.\ 5.6]{Jarra23}, $\F$ is an algebraic extension of $\F_1$ in the sense of \cite[Def.\ 5.1]{Jarra23}.
\end{rem}

\begin{lemma}
\label{strong congruence affine line}
Let $\lambda \in \F^\times$ and $n \geq 1$ such that there is no divisor $d > 1$ of $n$ and $\theta \in \F$ satisfying $\theta^d = \lambda$. Then $\langle T^n \sim \lambda \rangle$ is an $\F$-congruence of $\F[T]$.
\end{lemma}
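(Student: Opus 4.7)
The plan is to realize the quotient $A := \F[T]/\langle T^n \sim \lambda\rangle$ as a pointed submonoid of $\F_{1^\infty}$, which by the characterization of domains recalled just before the lemma will show that $\langle T^n \sim \lambda\rangle$ is a strong prime congruence. Since $\lambda \in \mu_\infty$, one can pick some $\theta \in \mu_\infty \subseteq \F_{1^\infty}$ with $\theta^n = \lambda$. Extending the inclusion $\F \hookrightarrow \F_{1^\infty}$ by $T \mapsto \theta$ then gives a morphism of pointed monoids $\phi \colon \F[T] \to \F_{1^\infty}$ that factors through a morphism $\bar\phi \colon A \to \F_{1^\infty}$.

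A routine reduction using the relation $T^n \sim \lambda$ shows that every element of $A$ has a representative of the form $\mu T^r$ with $\mu \in \F$ and $0 \le r < n$. The heart of the lemma is thus the injectivity of $\bar\phi$: if $\mu\theta^r = \mu'\theta^{r'}$ for some $\mu, \mu' \in \F^\times$ and $0 \le r \le r' < n$, then $r = r'$ and $\mu = \mu'$. Once this is shown, $A$ embeds into the pointed submonoid $\F_{1^\infty} \subset \C^\bullet$, hence is a domain; the composition $\F \to A \xrightarrow{\bar\phi} \F_{1^\infty}$ being the original inclusion then shows that $\F \to A$ is injective, so $\langle T^n \sim \lambda\rangle$ is an $\F$-congruence.

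The main step, and the only place where the hypothesis enters, is a B\'ezout argument. Assuming $r < r'$ and setting $m := r' - r \in \{1, \dotsc, n-1\}$ gives $\theta^m = \mu/\mu' \in \F$. Writing $g := \gcd(m, n) = sm + tn$ for some $s, t \in \Z$, one obtains $\theta^g = (\theta^m)^s \lambda^t \in \F$, and the integer $d := n/g$ is a divisor of $n$ with $d > 1$ (since $g \le m < n$) satisfying $(\theta^g)^d = \theta^n = \lambda$. This contradicts the hypothesis, forcing $r = r'$, and then $\mu = \mu'$ is immediate. I expect this B\'ezout manipulation---producing an element of $\F$ whose appropriate power equals $\lambda$---to be the only nontrivial point, with everything else amounting to formal bookkeeping.
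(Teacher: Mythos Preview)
Your proof is correct and follows essentially the same strategy as the paper: define an $\F$-algebra map $\F[T]\to\F_{1^\infty}$ sending $T$ to an $n$th root $\omega$ of $\lambda$, and show that its congruence kernel is exactly $\langle T^n\sim\lambda\rangle$ (equivalently, that the induced map on the quotient is injective). The only cosmetic difference is in how the contradiction is extracted: the paper introduces $\ell=\min\{w\ge1:\omega^{w}\in\F\}$, notes that $\ell\mid n$, and gets $\rho:=\omega^{\ell}\in\F$ with $\rho^{n/\ell}=\lambda$, whereas you run an explicit B\'ezout argument to pass from $\theta^{m}\in\F$ to $\theta^{g}\in\F$ with $g=\gcd(m,n)$; these are two phrasings of the same divisibility step.
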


\begin{proof}
Let $\omega \in \F_{1^\infty}$ such that $\omega^n = \lambda$. The map of $\F$-algebras 
\[
    \begin{array}{cccc}
    f: & \F[T] & \rightarrow & \F_{1^\infty}\\
    & T & \mapsto & \omega
    \end{array}
\]
satisfies $\langle T^n \sim \lambda \rangle \subseteq \textup{congker}(f) := \mathfrak{c}$. Let $\alpha \neq \beta$ be in $\F[T] \backslash \{0\}$ such that $\alpha \sim_\mathfrak{c} \beta$. Thus there exists two integers $0 \leq n < m$ and an element $a \in \F^\times$ such that $T^m \sim_\mathfrak{c} a T^n$. As $\mathfrak{c}$ is prime, one has $T^{m-n} \sim_\mathfrak{c} a$. 

Let $\ell := \textup{min} \big\{w \geq 1 \mid T^w \sim_\mathfrak{c} b \text{ for some } b \in \F^\times \big\}$ and $\rho \in \F^\times$ such that $T^\ell \sim_\mathfrak{c} \rho$. Note that $\ell$ divides $n$ and $\mathfrak{c} = \langle T^\ell \sim \rho \rangle$. As $\rho^{n/\ell} \sim_\mathfrak{c} T^n \sim_\mathfrak{c} \lambda$ and $\mathfrak{c}$ is an $\F$-congruence, one has $\rho^{n/\ell} = \lambda$, which implies $\ell = n$ and $\mathfrak{c} = \langle T^n \sim \lambda \rangle$.
\end{proof}

\begin{prop}
\label{description projective line}
The set $\SCong_\F \PL_\F^1$ consists of the following points:
\begin{itemize}
    \item the generic point, which corresponds to the trivial congruence;
    \item the point zero $\langle X/Y \sim 0 \rangle \in D_+(Y)$;
    \item the point at infinity $\langle 0 \sim Y/X \rangle \in D_+(X)$;
    \item congruences $\langle (X/Y)^n \sim \lambda \rangle \in D_+(Y) \cap D_+(X)$ with $\lambda \in \F^\times$ and $n \geq 1$ such that there is no divisor $d > 1$ of $n$ and $\theta \in \F$ satisfying $\theta^d = \lambda$.
\end{itemize}
\end{prop}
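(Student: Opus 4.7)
The plan is to exploit the standard affine cover $\PL_\F^1 = D_+(Y) \cup D_+(X)$ together with the covering property $\SCong_\F \PL_\F^1 = \SCong_\F \F[X/Y] \cup \SCong_\F \F[Y/X]$ listed among the defining properties of the functor $\SCong_\F(-)$. The problem then reduces to classifying strong $\F$-congruences on the affine line $\MSpec \F[T]$ and then sorting out which ones extend to the overlap $D_+(Y) \cap D_+(X) = \MSpec \F[T^\pm]$.

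Let $\mathfrak{c}$ be a strong $\F$-congruence on $\F[T]$. Since $\F[T]/\mathfrak{c}$ is integral, its null ideal $I_\mathfrak{c}$ is a prime ideal of $\F[T]$, hence equal to $\{0\}$ or $\langle T \rangle$. If $I_\mathfrak{c} = \langle T \rangle$, then $\F[T]/\langle T \sim 0 \rangle = \F$ is already a domain and any further identification would collapse distinct elements of $\F$, violating $\F$-injectivity; so $\mathfrak{c} = \langle T \sim 0 \rangle$, giving the zero point on one chart and the point at infinity on the other. If $I_\mathfrak{c} = \{0\}$ and $\mathfrak{c}$ is nontrivial, some pair of distinct nonzero monomials satisfies $aT^n \sim bT^m$; primality combined with cancellation of the nonzero factor $aT^n$ in the quotient yields $T^{|m-n|} \sim a^{-1}b \in \F^\times$. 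Letting $\ell$ be minimal with $T^\ell \sim \rho$ for some $\rho \in \F^\times$, $\ell$-minimality forces $\ell \mid |m-n|$ on every such relation, and $\F$-injectivity then pins down the coefficients, so $\mathfrak{c} = \langle T^\ell \sim \rho \rangle$.

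It remains to verify that such $(\ell, \rho)$ satisfies the non-root hypothesis of \autoref{strong congruence affine line}; the converse direction (that any such pair yields an $\F$-congruence) is exactly that lemma. If instead there were $\theta \in \F$ and $d > 1$ dividing $\ell$ with $\theta^d = \rho$, then $T^{\ell/d}$ and $\theta$, together with products involving any $d$-th roots of unity lying in $\F$, would furnish strictly more than $d$ distinct $d$-th roots of $\rho$ in the quotient, contradicting the characterization of a domain by $\#\{x \mid x^n = \alpha\} \leq n$ recorded in the preceding proposition. Finally one glues: $\langle X/Y \sim 0 \rangle$ lives only in $D_+(Y)$ and $\langle Y/X \sim 0 \rangle$ only in $D_+(X)$, since in either case the generator set to zero becomes a unit on the overlap; the trivial congruences on the two charts glue to a single generic point; and each $\langle (X/Y)^n \sim \lambda \rangle$ with $\lambda \in \F^\times$ extends uniquely to $\F[(X/Y)^\pm]$ and matches, under the gluing identification $Y/X = (X/Y)^{-1}$, the congruence $\langle (Y/X)^n \sim \lambda^{-1} \rangle$ on the other chart, placing it in the intersection as claimed.

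The delicate step will be the distinctness count used in the necessity of the non-root hypothesis: one has to confirm that the candidates $T^{\ell/d}$, $\theta\zeta$, and their mutual products really are pairwise distinct in $\F[T]/\langle T^\ell \sim \rho \rangle$, which uses the minimality of $\ell$ to separate $T^{\ell/d}$ from every element of $\F$, and uses the $\F$-injectivity of the quotient map to separate the elements of $\F$ among themselves.
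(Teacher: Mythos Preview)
Your argument follows the paper's line almost exactly: strip off the point at infinity to reduce to $\SCong_\F \F[T]$, show that every nontrivial strong $\F$-congruence with trivial null ideal equals $\langle T^\ell \sim \rho\rangle$ for the minimal exponent $\ell$, and then establish necessity of the non-root hypothesis by producing too many $d$-th roots of $\rho$ in the quotient. Your gluing paragraph is more explicit than the paper's one-line identification $\SCong_\F \PL_\F^1 \setminus \{[\infty]\} \simeq \SCong_\F \F[T]$, but otherwise the strategies coincide.

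The root-counting step, however, has a genuine gap, and it is not merely a missing verification. You assert that $\overline T^{\,\ell/d}$ and $\theta$, multiplied by the $d$-th roots of unity lying in $\F$, yield strictly more than $d$ distinct $d$-th roots of $\rho$; but $\F$ may contain very few $d$-th roots of unity. Take $\F = \F_1$, $\ell = 2$, $\rho = 1$, and $(d,\theta) = (2,1)$: the only $d$-th root of unity in $\F_1$ is $1$, so your list is just $\{1,\overline T\}$, of size $2 = d$, and no contradiction arises. Nor should one: $\F_1[T]/\langle T^2 \sim 1\rangle = \{0,1,\overline T\}$ embeds in $\Q^\bullet$ via $\overline T \mapsto -1$, hence \emph{is} a domain, even though $(d,\theta) = (2,1)$ violates the non-root hypothesis as literally stated. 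The paper's proof conceals the same problem in the unjustified assertion that $\zeta := \theta^h$ has order exactly $d$ (in this example $\zeta = 1$). The upshot is that the proposition, read literally, clashes with the subsequent corollary, which needs $[n] = [n,1]$ to exist for every $n \geq 1$ over $\F_1$; your difficulty lies with the formulation of the statement rather than with your method.
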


\begin{proof}
One knows that $\SCong_\F \PL_\F^1 \backslash \{\langle 0 \sim Y/X \rangle\} = \SCong_\F D_+(Y) \simeq \SCong_\F \F[T]$. Let $\mathfrak{c} \in \SCong_\F \F[T] \backslash \{ =, \langle T \sim 0 \rangle \}$. Analogous to what is in the proof of \autoref{strong congruence affine line}, there exists a non-zero $\lambda \in \F$ and $\ell = \textup{min} \{w \geq 1 \mid T^w \sim_\mathfrak{c} b \text{ for some } b \in \F^\times \}$ such that $\mathfrak{c} = \langle T^\ell \sim \lambda \rangle$.

Let $h := \textup{ord}(\lambda)$. If there exists a divisor $d > 1$ of $\ell$, and $\theta \in \F$ such that $\theta^d = \lambda$, let $\zeta := \theta^h$. Note that $\textup{ord}(\zeta) = d$ and 
\[
\{\zeta^i.\theta \mid 0 \leq i \leq d-1\} \cup \{\zeta^j.\overline{T}^{\ell/d} \mid 0 \leq j \leq d-1\}
\]
is a subset of $\{ x \in \F[T] / \mathfrak{c} \mid x^d = \lambda\}$ with $2d$ elements. As $\F[T] / \mathfrak{c}$ is a domain, one has a contradiction. 

Therefore, the result follows from \autoref{strong congruence affine line}.
\end{proof}

\begin{notation}
\emph{Let $\widetilde{\PL}_\F^1 := \SCong_\F \PL_{\F}^1$. We introduce the following notation for the non-generic points of $\widetilde{\PL}_\F^1$:}

\begin{itemize}
    \item \emph{$[n, \lambda] := \langle (X/Y)^n \sim \lambda \rangle \in D_+(Y) \cap D_+(X)$ for $n \geq 1$ and $\lambda \in \F^\times$ such that there is no divisor $d > 1$ of $n$ and $\theta \in \F$ satisfying $\theta^d = \lambda$};
    \item \emph{$[0] := \langle X/Y \sim 0 \rangle \in D_+(Y)$};
    \item \emph{$[\infty] := \langle 0 \sim Y/X \rangle \in D_+(X)$}.
\end{itemize}

\emph{Note that if $[n, \lambda] \in \widetilde{\PL}_{\F_{1^\infty}}^1$, then $n = 1$. In this case we use $[\lambda]$ instead of $[1, \lambda]$. As $\F_1^\times = \{1\}$, if $[n, \lambda] \in \widetilde{\PL}_{\F_1}^1$, then $\lambda = 1$. In this case we use $[n]$ instead of $[n, 1]$.}
\end{notation}

\begin{cor}
\label{Smirnov's projective line as strong congruence space F1}
The map $i: \PL_{\F_1}^{1, \textup{Smi}} \rightarrow \widetilde{\PL}_{\F_1}^1$, given by $i([n]) = [n]$, is an injection whose image is the set of non-generic points of $\widetilde{\PL}_{\F_1}^1$.
\end{cor}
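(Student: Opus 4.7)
The plan is to apply \autoref{description projective line} to $\F = \F_1$ and read off the non-generic points. Since $\F_1^\times = \{1\}$, every point of $\widetilde{\PL}_{\F_1}^1$ of the form $[n, \lambda]$ must have $\lambda = 1$, and following the notation convention set in the paper we write $[n]$ in place of $[n, 1]$.

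Next I would verify that the restriction on $n$ imposed by \autoref{description projective line} is vacuous for $\F_1$. The obstruction that proof exploits is the existence of an element $\theta \in \F$ with $\theta^d = \lambda$ for some divisor $d > 1$ of $n$ such that $\zeta := \theta^{\textup{ord}(\lambda)}$ has order $d$, which would produce $2d$ distinct solutions of $x^d = \lambda$ in the quotient and contradict the domain condition. In $\F_1 = \{0, 1\}$ the only unit is $1$, of order $1$, so no such $\theta$ of order $d > 1$ exists. Consequently $[n] = [n, 1]$ is a non-generic point of $\widetilde{\PL}_{\F_1}^1$ for every $n \geq 1$, and the non-generic locus is exactly
\[
\{[0], [\infty]\} \cup \{[n] \mid n \geq 1\}.
\]

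To finish, I would compare this description to $\PL_{\F_1}^{1, \textup{Smi}} = \{[n] \mid n \in \N\} \cup \{[\infty]\}$ and read off that the homonymous assignment $i$ is a bijection onto the non-generic locus. Injectivity is automatic because the listed congruences are pairwise distinct: the quotient $\F_1[X/Y] / \langle (X/Y)^n \sim 1 \rangle$ is a cyclic group of order $n$ with an adjoined zero, so distinct values of $n \geq 1$ yield non-isomorphic quotients, while $[0]$ and $[\infty]$ are supported on different schematic points of $\PL^1_{\F_1}$. The only step that demands real attention is the vacuity observation in the second paragraph; the rest of the argument is notational bookkeeping.
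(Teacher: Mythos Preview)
Your proof is correct and follows exactly the route the paper intends: in the paper this corollary carries no proof at all, being read off from \autoref{description projective line} specialized to $\F=\F_1$ together with the notational convention $[n]:=[n,1]$.

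One point worth flagging: your second paragraph is doing more than bookkeeping. Taken literally, the condition in \autoref{description projective line} (``no divisor $d>1$ of $n$ and $\theta\in\F$ with $\theta^d=\lambda$'') is \emph{not} vacuous over $\F_1$, since $\theta=1$ satisfies $1^d=1$ for every $d$, which would formally exclude all $n>1$. You sidestep this by returning to the proof and noting that the actual obstruction requires an element $\zeta\in\F^\times$ of order $d>1$, which $\F_1^\times=\{1\}$ cannot supply; together with your observation that $\F_1[X/Y]/\langle (X/Y)^n\sim 1\rangle$ is a cyclic group with zero adjoined (hence a domain embedding in $\C^\bullet$), this legitimately establishes that every $\langle (X/Y)^n\sim 1\rangle$ is an $\F_1$-congruence. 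So your argument quietly repairs a wrinkle in how the condition of \autoref{description projective line} is phrased, rather than merely citing it.
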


\begin{cor}
\label{Smirnov's projective line as strong congruence space F1infty}
The map $i_\infty: \PL^{1, \textup{Smi}}(\F_{1^\infty}) \rightarrow\widetilde{\PL}_{\F_{1^\infty}}^1$, given by $i_\infty(\lambda) = [\lambda]$, is an injection whose image is the set of non-generic points of $\widetilde{\PL}_{\F_{1^\infty}}^1$.
\end{cor}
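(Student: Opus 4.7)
The plan is to deduce the Corollary as an immediate consequence of \autoref{description projective line} specialized to $\F = \F_{1^\infty}$, combined with the divisibility of the group $\mu_\infty$. I would start by invoking the proposition to enumerate the points of $\widetilde{\PL}_{\F_{1^\infty}}^1$: besides the generic point, one gets $[0]$, $[\infty]$, and the congruences $[n, \lambda] = \langle (X/Y)^n \sim \lambda \rangle$ with $\lambda \in \F_{1^\infty}^\times = \mu_\infty$ and $n \geq 1$ such that no divisor $d > 1$ of $n$ admits $\theta \in \F_{1^\infty}$ with $\theta^d = \lambda$.

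The key step, already anticipated in the notation box preceding the Corollary, is to observe that this last condition forces $n = 1$ when $\F = \F_{1^\infty}$. Indeed, $\mu_\infty$ is a divisible abelian group: for every $\lambda \in \mu_\infty$ of order $k$ and every $n \geq 1$, any primitive $kn$-th root of unity $\omega$ admits a power $\theta = \omega^j$ with $\theta^n = \lambda$. Hence if $n \geq 2$ one can take $d = n$ and produce the forbidden $\theta$, contradicting the condition in \autoref{description projective line}. It follows that the non-generic points of $\widetilde{\PL}_{\F_{1^\infty}}^1$ are exactly $[0]$, $[\infty]$, and $[\lambda] := [1, \lambda]$ for $\lambda \in \mu_\infty$.

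To conclude, I would verify that $i_\infty$ is a bijection onto this set of non-generic points. On $\F_{1^\infty}^\times = \mu_\infty$ the map sends $\lambda \mapsto [\lambda]$, it sends $0 \in \F_{1^\infty}$ to $[0]$, and it sends $\infty$ to $[\infty]$. Surjectivity onto the non-generic points is precisely the enumeration above. For injectivity, distinct $\lambda_1, \lambda_2 \in \mu_\infty$ yield distinct classes of $X/Y$ in the quotient domain $\F_{1^\infty}[X/Y]/\langle X/Y \sim \lambda_i \rangle$, and the three families $[0]$, $[\infty]$, $[\lambda]$ are separated by whether they lie in $D_+(Y)$, $D_+(X)$, or both and by the value of $X/Y$ modulo the congruence. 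There is no real obstacle here; the entire argument reduces to the divisibility remark on $\mu_\infty$, which is where the hypothesis $\F = \F_{1^\infty}$ (as opposed to $\F_1$, handled separately in \autoref{Smirnov's projective line as strong congruence space F1}) enters in a nontrivial way.
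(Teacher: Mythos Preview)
Your proposal is correct and follows exactly the route the paper intends: the corollary is stated without proof, as an immediate consequence of \autoref{description projective line} together with the observation in the preceding notation block that $n=1$ is forced when $\F=\F_{1^\infty}$, and you have simply spelled out the divisibility of $\mu_\infty$ that justifies that observation. There is nothing to add.
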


The inclusion $\F \hookrightarrow \F_{1^\infty}$ induces a morphism $\PL_{\F_{1^\infty}}^1 \rightarrow \PL_{\F}^1$ and, consequently, a continuous map $\SCong_\F \PL_{\F_{1^\infty}}^1 \rightarrow \widetilde{\PL}_{\F}^1$. As $\widetilde{\PL}_{\F_{1^\infty}}^1 \subseteq \SCong_\F \PL_{\F_{1^\infty}}^1$, by restriction one has
\[
\Phi_\F: \widetilde{\PL}_{\F_{1^\infty}}^1 \rightarrow \widetilde{\PL}_{\F}^1.
\]

For $\zeta$ in $\F_{1^\infty}$, let $\textup{ord}_\F(\zeta) := \textup{min}\{ w \geq 1 \mid \zeta^w \in \F\}$. Note that $\Phi_\F$ is surjective and 
\[
\Phi^{-1}_\F([n, \lambda]) = \{[\zeta] \in \widetilde{\PL}_{\F_{1^\infty}}^1 \mid \textup{ord}_\F(\zeta) = n \text{ and } \zeta^n = \lambda\}.
\]

Analogously, the inclusion $\F_1 \hookrightarrow \F$ induces a surjective continuous map 
\[
\Psi_\F: \widetilde{\PL}_{\F}^1 \rightarrow \widetilde{\PL}_{\F_1}^1.
\]
such that $\Psi_\F^{-1}([n]) = \{[m, \lambda] \in \widetilde{\PL}_{\F}^1 \mid \textup{ord}(\lambda) = n / m\}$.

\begin{rem}
The map $\Phi_{\F_1} = \Psi_{\F_{1^\infty}}: \widetilde{\PL}_{\F_{1^\infty}}^1 \rightarrow \widetilde{\PL}_{\F_1}^1$ satisfies $\# \Phi_{\F_1}^{-1}([n]) = \phi(n)$, where $\phi$ is the Euler function.
\end{rem}

\begin{rem}
Note that the map $\Psi_{\F_{1^2}}: \widetilde{\PL}_{\F_{1^2}}^1 \rightarrow \widetilde{\PL}_{\F_1}^1$ is a homeomorphism.
\end{rem}

Let $G$ be the profinite group $\textup{Gal}(\Q(\mu_\infty) / \Q)$. The map $\varphi \mapsto \varphi|_{\F_{1^\infty}}$ is an isomorphism $G \overset{\sim}{\rightarrow} \textup{Aut}_{\mathcal{M}_0}(\F_{1^\infty})$ (\textit{cf.} \cite[Ex.\ IV.2.6]{Neukirch99}). The group $G$ acts on $\widetilde{\PL}_{\F_{1^\infty}}^1$ by
\[
  \begin{array}{ccl}
 G \times \widetilde{\PL}_{\F_{1^\infty}}^1 & \longrightarrow & \quad \widetilde{\PL}_{\F_{1^\infty}}^1 \\
 \vspace{1mm}
 &&\\
            (g, Q) & \longmapsto & \smash{\left\{\begin{array}{ll}
      [g \cdot \lambda] &\text{if } Q = [\lambda] \text{ for some } \lambda \in \F_{1^\infty};\\
      {}[\infty] &\text{if } Q = [\infty];\\
      {} Q &\text{if } Q \text{ is the generic point.}
    \end{array}\right.}
  \end{array}
\]
\\

Given a subgroup $\Gamma$ of $G$, we define the pointed subgroup of $\F_{1^\infty}$
\[
\F_\Gamma := \{\lambda \in \F_{1^\infty} \mid g \cdot \lambda = \lambda \text{ for all } g \in \Gamma\}.
\]

\begin{thm}
\label{Smirnov projective line}
If $\Gamma$ is a closed subgroup of $\textup{Gal}(\Q(\mu_\infty) / \Q)$, then $\Phi_{\F_\Gamma}$ induces a continuous bijection $b_\Gamma: \widetilde{\PL}_{\F_{1^\infty}}^1 / \Gamma \rightarrow \widetilde{\PL}_{\F_\Gamma}^1$. 
\end{thm}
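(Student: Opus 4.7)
The plan is to verify, in order, $\Gamma$-invariance, surjectivity, and finally injectivity of $\Phi_{\F_\Gamma}$; the first two are essentially formal and the last is where the Galois theory of $\Q(\mu_\infty)/\Q$ enters.

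First I would observe that $\Phi_{\F_\Gamma}$ is constant on $\Gamma$-orbits. The points $[\infty]$ and the generic point are fixed by $\Gamma$ by definition of the action, so there is nothing to check there. For a point $[\lambda]$ with $\lambda\in\F_{1^\infty}^\times$, writing $n = \textup{ord}_{\F_\Gamma}(\lambda)$ one has $\lambda^n\in\F_\Gamma$; hence for $g\in\Gamma$, $g(\lambda)^n = g(\lambda^n) = \lambda^n$, and since $\F_\Gamma$ is stable under $\Gamma$ one also has $\textup{ord}_{\F_\Gamma}(g\lambda) = \textup{ord}_{\F_\Gamma}(\lambda) = n$. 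Thus $\Phi_{\F_\Gamma}(g\cdot[\lambda]) = [n,\lambda^n] = \Phi_{\F_\Gamma}([\lambda])$. By the universal property of the quotient topology, this gives a continuous map $b_\Gamma:\widetilde{\PL}_{\F_{1^\infty}}^1/\Gamma\to\widetilde{\PL}_{\F_\Gamma}^1$. Surjectivity of $b_\Gamma$ follows from that of $\Phi_{\F_\Gamma}$: given $[n,\lambda']\in\widetilde{\PL}_{\F_\Gamma}^1$ pick any $n$-th root $\zeta\in\F_{1^\infty}$ of $\lambda'$; if $\textup{ord}_{\F_\Gamma}(\zeta)=d$ were a proper divisor of $n$ then $\lambda'=(\zeta^d)^{n/d}$ would violate the hypothesis characterising $[n,\lambda']$ from \autoref{description projective line}, so $\textup{ord}_{\F_\Gamma}(\zeta)=n$ and $\Phi_{\F_\Gamma}([\zeta])=[n,\lambda']$.

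The main step is injectivity. Let $K:=\Q(\mu_\infty)^\Gamma$; since $\Gamma$ is closed, infinite Galois theory gives $\Gamma=\textup{Gal}(\Q(\mu_\infty)/K)$, and one checks that $\F_\Gamma = (K\cap\mu_\infty)\cup\{0\}$. Suppose $\Phi_{\F_\Gamma}([\lambda])=\Phi_{\F_\Gamma}([\mu])=[n,\lambda']$, so $\lambda^n=\mu^n=\lambda'\in\F_\Gamma$ and both $\lambda,\mu$ have $\F_\Gamma$-order exactly $n$. I would prove the following
\begin{equation*}
[K(\lambda):K] \;=\; \textup{ord}_{\F_\Gamma}(\lambda) \;=\; n.
\end{equation*}
The inequality $\leq$ is clear since $\lambda$ satisfies $T^n-\lambda'\in K[T]$. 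For the reverse, the map $g\mapsto g(\lambda)/\lambda$ embeds $\textup{Gal}(K(\lambda)/K)$ into $\mu_n$, so $m:=[K(\lambda):K]$ divides $n$; if $m<n$ then $\lambda^m$ is Galois-fixed and lies in $K\cap\mu_\infty\subseteq\F_\Gamma$, contradicting minimality of $n$. Given the identity above, the $\Gamma$-orbit of $\lambda$ has exactly $n$ elements, all of which lie in the set
\begin{equation*}
S \;:=\; \bigl\{\nu\in\mu_\infty \;\big|\; \nu^n=\lambda',\ \textup{ord}_{\F_\Gamma}(\nu)=n\bigr\}
\end{equation*}
(by the same $\Gamma$-invariance argument used for well-definedness). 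Since $|S|\leq n$ (there are only $n$ roots of $T^n-\lambda'$ in $\mu_\infty$), the $\Gamma$-orbit of $\lambda$ coincides with $S$; in particular $\mu\in S$ is in the $\Gamma$-orbit of $\lambda$, proving injectivity of $b_\Gamma$.

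The main obstacle is establishing $[K(\lambda):K]=n$, which is where the hypothesis that $\Gamma$ be closed is essential (otherwise $\F_\Gamma$ need not match the fixed field $K$ appropriately). Once this is in place, a simple pigeonhole on the size of $S$ collapses the two a priori different equivalence relations (``same $\Gamma$-orbit'' and ``same image under $\Phi_{\F_\Gamma}$'') onto one another. I do not expect to prove that $b_\Gamma$ is a homeomorphism, as the statement asks only for a continuous bijection; this is consistent with the remark that $\Psi_{\F_{1^2}}$ is a homeomorphism being singled out as noteworthy.
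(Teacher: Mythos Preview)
Your injectivity argument has a genuine gap. The claimed identity $[K(\lambda):K]=\textup{ord}_{\F_\Gamma}(\lambda)=n$ is false in general: take $\Gamma=G$ (so $K=\Q$ and $\F_\Gamma=\F_{1^2}$) and $\lambda=\zeta_5$; then $n=5$ but $[\Q(\zeta_5):\Q]=4$. The step that breaks is the assertion that $g\mapsto g(\lambda)/\lambda$ \emph{embeds} $\textup{Gal}(K(\lambda)/K)$ into $\mu_n$. This map is injective and lands in $\mu_n$, but it is only a crossed homomorphism (a $1$-cocycle for the Galois action on $\mu_n$), not a genuine homomorphism unless $\mu_n\subseteq K$. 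Consequently neither ``$m\mid n$'' (indeed $4\nmid 5$ in the example) nor ``$\lambda^m$ is Galois-fixed'' follows: in the example $\zeta_5^4\notin\Q$. Your pigeonhole argument that the $\Gamma$-orbit equals $S$ therefore has no foundation, even though in this particular example both sets happen to have four elements.

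The paper's route is different and avoids any counting. Given $\zeta,\xi$ with the same image, it observes that $\Q(\F_\Gamma,\zeta)=\Q(\F_\Gamma,\xi)$, produces $\varphi\in\textup{Gal}\bigl(\Q(\F_\Gamma,\zeta)/\Q(\F_\Gamma)\bigr)$ with $\varphi(\zeta)=\xi$, lifts to $\psi\in\textup{Gal}\bigl(\Q(\mu_\infty)/\Q(\F_\Gamma)\bigr)$, and then invokes closedness to conclude $\psi\in\Gamma$. Note that this last step asserts $\Gamma=\textup{Gal}\bigl(\Q(\mu_\infty)/\Q(\F_\Gamma)\bigr)$, whereas closedness only gives $\Gamma=\textup{Gal}\bigl(\Q(\mu_\infty)/K\bigr)$ with $K=\Q(\mu_\infty)^\Gamma$; your own setup makes the implicit hypothesis $K=\Q(\F_\Gamma)$ visible. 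That equality can fail (e.g.\ $\Gamma=\textup{Gal}(\Q(\mu_\infty)/\Q(\sqrt2))$ has $\F_\Gamma=\F_{1^2}$ but $K=\Q(\sqrt2)$, and then $[\zeta_8]$, $[\zeta_8^3]$ lie in distinct $\Gamma$-orbits yet both map to $[4,-1]$), so both arguments ultimately require that $\Q(\mu_\infty)^\Gamma$ be generated over $\Q$ by roots of unity.
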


\begin{proof}
Note that $\Phi_{\F_\Gamma}([\zeta]) = \Phi_{\F_\Gamma}(g \cdot [\zeta])$ for all $\zeta \in (\F_{1^\infty})^\times$ and $g \in \Gamma$, thus there exists a continuous map $b_\Gamma: \widetilde{\PL}_{\F_{1^\infty}}^1 / \Gamma \rightarrow \widetilde{\PL}_{\F_\Gamma}^1$ such that 
\[
\begin{tikzcd}
\widetilde{\PL}_{\F_{1^\infty}}^1 \arrow[dr, "\Phi_{\F_\Gamma}", pos=0.4] \arrow[d, two heads] & 
\\
\widetilde{\PL}_{\F_{1^\infty}}^1 / \Gamma \arrow[r, "b_\Gamma", swap, dashrightarrow] & \widetilde{\PL}_{\F_\Gamma}^1
\end{tikzcd}
\]
commutes. As $\Phi_{\F_\Gamma}$ is surjective, it only remains to show that $b_\Gamma$ is injective. 

Let $\zeta, \xi \in \F^\times$ such that $\Phi_{\F_\Gamma}([\zeta]) = \Phi_{\F_\Gamma}([\xi])$. Then $n := \textup{ord}_{\F_\Gamma}(\zeta) = \textup{ord}_{\F_\Gamma}(\xi)$ and $\lambda:= \zeta^n = \xi^n$. Note that $\Q({\F_\Gamma}, \zeta) = \Q({\F_\Gamma}, \xi)$ and there exists $\varphi \in \textup{Gal}(\Q({\F_\Gamma}, \zeta) / \Q({\F_\Gamma}))$ such that $\varphi(\zeta) = \xi$. Thus there exists $\psi \in \textup{Gal}(\Q(\mu_\infty) / \Q({\F_\Gamma}))$ such that $\psi|_{\Q({\F_\Gamma}, \zeta)} = \varphi$. As $\Gamma$ is closed, $\Gamma = \textup{Gal}(\Q(\mu_\infty) / \Q({\F_\Gamma}))$. Therefore $b_\Gamma$ is injective.
\end{proof}

\begin{rem}
For $G = \textup{Gal}(\Q(\mu_\infty) / \Q)$, the set of non-generic points of $\widetilde{\PL}_{\F_{1^\infty}}^1 / G$ has the cofinite topology, thus it is a $T_1$ space. Note that $\F_G = \F_{1^2}$ and the set of non-generic points of $\widetilde{\PL}_{\F_{1^2}}^1$ is not a $T_1$ space, because the point $[9,1]$ is in the closure of $[3, 1]$. 

We conclude that the map $b_\Gamma: \widetilde{\PL}_{\F_{1^\infty}}^1 / \Gamma \rightarrow \widetilde{\PL}_{\F_\Gamma}^1$ is not a homeomorphism in general.
\end{rem}

\begin{rem}
\label{quotient of projective line}
Note that $i_\infty: \PL^{1, \textup{Smi}}(\F_{1^\infty}) \rightarrow\widetilde{\PL}_{\F_{1^\infty}}^1$ is a morphism of $G$-sets. Thus, given a subgroup $\Gamma$ of $G$, one has an injective map $p_\Gamma: \PL_{\Gamma}^{1, \textup{Smi}} \rightarrow \widetilde{\PL}_{\F_{1^\infty}}^1 / \Gamma$, whose image consists of all non-generic points of $\widetilde{\PL}_{\F_{1^\infty}}^1 / \Gamma$.

If $\Gamma$ is closed, by \autoref{Smirnov projective line}, the map $j_\Gamma := b_\Gamma  \circ p_\Gamma: \PL_{\Gamma}^{1, \textup{Smi}} \rightarrow \widetilde{\PL}_{\F_\Gamma}^1$ is an injection whose image consists of all non-generic points of $\widetilde{\PL}_{\F_\Gamma}^1$.
\end{rem}

\section{The compactification of \texorpdfstring{$\Spec \mathcal{O}_K$}{Spec OK}}
\label{compactification}

Let $K$ be a number field and $\mathcal{O}_K$ its ring of integers. We use $|-|_0$ to denote the trivial absolute value on $K$ and $[0]$ for the trivial place.

As $\mathcal{O}_K$ is a Dedekind domain, every non-zero fractional ideal $I \subseteq K$ has a unique decomposition 
\[
I = \prod P^{e_P},
\]
where $P \in (\Spec \mathcal{O}_K) \backslash\{0\}$ and $e_P \in \Z$, with $e_P \neq 0$ for only finitely many primes $P$. Given a constant $c>1$, each $P \in \Spec \mathcal{O}_K$ induces a non-archimedean absolute value
\[
\begin{array}{cccl}
|-|_P: & K & \longrightarrow & \quad\R_{\geq 0}\\
\\
 & a & \longmapsto & \smash{\left\{\begin{array}{ll}
      c^{-e_P} &\text{if }\; a \neq 0 \;\text{ and }\; a\mathcal{O}_K = \prod Q^{e_Q}\\
      {}0 &\text{if } \; a = 0.
    \end{array}\right.}
\end{array}
\]
\\
We use $[P]$ to denote the equivalence class of $|-|_P$. Every non-trivial non-archimedean absolute value on $K$ is equivalent to $|-|_P$ for some prime $P$. 


Given a (possibly archimedean) place $\mathfrak{a}$, one has the associated pointed monoid $\mathcal{O}_\mathfrak{a} := \big\{ a \in K^\bullet \; \big| \; |a|\leq 1\big\}$, whose maximal ideal is $m_\mathfrak{a} := \big\{ a \in K^\bullet \; \big| \; |a| < 1\big\}$, and the pointed group $\kappa(\mathfrak{a}) := \mathcal{O}_\mathfrak{a} / m_\mathfrak{a}$. We define the \textit{compactification of $\Spec \mathcal{O}_K$} as the set
\[
\overline{\Spec \mathcal{O}_K} := \{\text{places of $K$} \}
\]
endowed with the topology where $[0]$ is the generic point and $\overline{\Spec \mathcal{O}_K} \backslash \{[0]\}$ has the cofinite topology. As $K$ has only a finite number of archimedean places, the subset 
\[
(\overline{\Spec \mathcal{O}_K})_{\textup{nA}} :=  \{ \text{non-archimedean places of } K \}
\]
is open. Note that for any $f \in K$, the subset
\[
U_f := \{ \mathfrak{a} \in \overline{\Spec \mathcal{O}_K} \mid f \notin m_\mathfrak{a} \}
\]
is open. We define the \textit{structure sheaf} of $\overline{\Spec \mathcal{O}_K}$ by
\[
\mathcal{O}_{\overline{\Spec \mathcal{O}_K}}(U) := \{\lambda \in K^\bullet \mid \lambda \in \mathcal{O}_\mathfrak{a} \text{ for all } \mathfrak{a} \in U\}
\]
for an open subset $U \subseteq \overline{\Spec \mathcal{O}_K}$. Note that $\overline{\Spec \mathcal{O}_K}$ is a monoidal space and the map $P \mapsto [P]$ induces an isomorphism $(\Spec \mathcal{O}_K)^\bullet \simeq (\overline{\Spec \mathcal{O}_K})_{\textup{nA}}$.

\section{Maps from \texorpdfstring{$\overline{\Spec \mathcal{O}_K}$}{the compactification of Spec OK} to \texorpdfstring{$\widetilde{\PL}_{\F_1}^1$}{the projective line over F1} induced by elements of \texorpdfstring{$K$}{K}}

There is a map $K^* \rightarrow \textup{Hom}_\textup{Sch}(\Spec \mathcal{O}_K, \PL_\Z^1)$ that sends a non-zero $f$ to the morphism $\varphi_f: \Spec \mathcal{O}_K \rightarrow \PL_\Z^1$ induced by the pair
\[
\begin{array}{ccccccccc}
\varphi_1: & \Z[X/Y] & \longrightarrow & \mathcal{O}_K[f] & \quad \text{and} \quad & \varphi_2:& \Z[Y/X] & \longrightarrow & \mathcal{O}_K[1/f] \\
& X/Y & \longmapsto & f & & & Y/X & \longmapsto & 1/f.
\end{array}
\]
By \autoref{factorization gamma}, every $f \in K^*$ defines a continuous map 
\[\widetilde{f} = \gamma \circ \varphi_f^\bullet: (\Spec \mathcal{O}_K)^\bullet \rightarrow \widetilde{\PL}_{\F_1}^1,
\]
which induces an injective morphism of pointed groups $\widetilde{f}_P: \kappa(\widetilde{f}(P)) \rightarrow \kappa(P)^\bullet$ for every $P \in \Spec \mathcal{O}_K$.

If $P \in \Spec (\mathcal{O}_K[f])$ is non-zero, then $\widetilde{f}(P) = \textup{congker}(\pi_P^\bullet \circ \varphi_1^\bullet \circ \iota)$, where 
\[
\F_1[X/Y] \overset{\iota}{\longrightarrow} \Z[X/Y]^\bullet \overset{\varphi_1^\bullet}{\longrightarrow} \mathcal{O}_K[f]^\bullet \overset{\pi_P^\bullet}{\longrightarrow} (\mathcal{O}_K[f] /P)^\bullet,
\]
{i.e.}, in $\widetilde{\PL}_{\F_1}^1$ one has

\[
\widetilde{f}(P) = \smash{\left\{\begin{array}{ll}
      [0] &\text{if } f \in P\\
    {}[n] &\text{if } f \notin P \text{ and $n$ is the order of $\overline{f}$ in $(\mathcal{O}_K[f] / P)^\times$.}
    \end{array}\right.}
\]
\\
The map $\widetilde{f}_P: \kappa(\widetilde{f}(P)) \hookrightarrow \kappa(P)^\bullet$ is the inclusion $\F_1 \hookrightarrow \kappa(P)^\bullet$ if $\widetilde{f}(P) = [0]$, and the inclusion 
\[
\begin{array}{ccc}
\F_1[T] / \langle T^n \sim 1 \rangle & \longhookrightarrow & \kappa(P)^\bullet \\ \overline{T} & \longmapsto & \overline{f}
\end{array}
\]
if $\widetilde{f}(P) = [n]$, where $T:=X/Y$.

Analogously, if $P$ is a non-zero prime ideal of $\mathcal{O}_K[1/f]$, then 

\[
\widetilde{f}(P) = \smash{\left\{\begin{array}{ll}
      [\infty] &\text{if } 1/f \in P\\
    {}[\textup{ord}(\overline{1/f})] &\text{if } 1/f \notin P.
    \end{array}\right.}
\]
\\
The map $\widetilde{f}_P: \kappa(\widetilde{f}(P)) \hookrightarrow \kappa(P)^\bullet$ is the inclusion $\F_1 \hookrightarrow \kappa(P)^\bullet$ if $\widetilde{f}(P) = [\infty]$, and the inclusion 
\[
\begin{array}{ccc}
\F_1[T^{-1}] / \langle T^{-m} \sim 1 \rangle & \longhookrightarrow & \kappa(P)^\bullet \\ \overline{(T^{-1})} & \longmapsto & \overline{1/f}
\end{array}
\]
if $\widetilde{f}(P) \neq [\infty]$.

If $P$ is the generic point of $\Spec \mathcal{O}_K$, then $\widetilde{f}(P)$ is the generic point of $\widetilde{\PL}_{\F_1}^1$ and $\widetilde{f}_P$ is the map
\[
\begin{array}{ccc}
\F_1[T^\pm] & \longrightarrow & K^\bullet \\ T & \longmapsto & f.
\end{array}
\]

\begin{df}[\cite{Smirnov93}]
An element $\lambda \in K$ is called \textit{exceptional number} if there exists an archimedean absolute value $|-|$ such that $|\lambda| = 1$.
\end{df}

\begin{thm}
\label{extension to the compactification}
If $f \in K^*$ is not exceptional, then there exists a morphism of monoidal spaces $\psi: \overline{\Spec \mathcal{O}_K} \rightarrow \PL_{\F_1}^1$ that extends
\[
(\Spec \mathcal{O}_K)^\bullet \overset{\varphi_f^\bullet}{\longrightarrow} (\PL_\Z^1)^\bullet \overset{p}{\longrightarrow} \PL_{\F_1}^1,
\]
and a continuous map $\check{f}: \overline{\Spec \mathcal{O}_K} \rightarrow \widetilde{\PL}_{\F_1}^1$ that factorizes $\psi$ and extends 
\[
\widetilde{f}: (\Spec \mathcal{O}_K)^\bullet \rightarrow \widetilde{\PL}_{\F_1}^1,
\]
as summarized in the following commutative diagram
 \[
\begin{tikzcd}[column sep = large]
\overline{\Spec \mathcal{O}_K} \arrow[r, "\check{f}", dashrightarrow]   & \widetilde{\PL}_{\F_1}^1  \arrow{dd}{\pi_{\PL_{\F_1}^1}}\\
&&\\
(\Spec \mathcal{O}_K)^\bullet \arrow[r, "p \circ \varphi_f^\bullet", swap] \arrow[uu, hook] \arrow[ruu, "\widetilde{f}", pos=0.2]  & \PL_{\F_1}^1.
\arrow[from=1-1, to=3-2, "\psi", pos=0.275, crossing over, dashrightarrow]
\end{tikzcd}
\]

The map $\check{f}$ induces an injective morphism $\check{f}_\mathfrak{a}: \kappa(\check{f}(\mathfrak{a})) \hookrightarrow \kappa(\mathfrak{a})$ for each $\mathfrak{a} \in \overline{\Spec \mathcal{O}_K}$, such that $\check{f}_{[P]} = \widetilde{f}_P$ for all $P \in \Spec \mathcal{O}_K$.
\end{thm}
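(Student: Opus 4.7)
The plan is to extend $\widetilde f$ and $p\circ\varphi_f^\bullet$ directly over the (finitely many) archimedean places, using non-exceptionality in an essential way. On the open subset $(\overline{\Spec\mathcal{O}_K})_{\mathrm{nA}}\simeq(\Spec\mathcal{O}_K)^\bullet$ I define $\check f:=\widetilde f$ and $\psi:=p\circ\varphi_f^\bullet$. For each archimedean place $\mathfrak{a}$ the hypothesis gives $|f|_\mathfrak{a}\neq 1$, so exactly one of $|f|_\mathfrak{a}<1$ or $|f|_\mathfrak{a}>1$ holds; in the first case set $\check f(\mathfrak{a}):=[0]$ and $\psi(\mathfrak{a}):=[0]$, in the second $\check f(\mathfrak{a}):=[\infty]$ and $\psi(\mathfrak{a}):=[\infty]$. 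The generic point of $\overline{\Spec\mathcal{O}_K}$ is sent to the generic point of each target.

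The main step is the continuity of $\check f$ (continuity of $\psi$ then follows by composing with $\pi_{\PL^1_{\F_1}}$). Since every non-empty open of $\overline{\Spec\mathcal{O}_K}$ has cofinite complement in the set of non-generic points, it suffices, for each basic open $U_{a,b}$ of $\widetilde{\PL}^1_{\F_1}$ (cf.\ \autoref{description projective line}), to show that the complement of $\check f^{-1}(U_{a,b})$ in the non-generic part is finite. In the representative case $a=T^k$, $b=T^l$ with $0\leq k<l$ and $T=X/Y$, direct inspection of the formulas for $\widetilde f(P)$ shows this complement is the union of the finitely many primes dividing $f$ or $1/f$, at most finitely many archimedean places, and the primes $P$ with $f\notin P$ and $\overline f^{l-k}=1$ in $\kappa(P)$, i.e.\ those dividing $f^{l-k}-1$. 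Since every root of unity has archimedean absolute value $1$ and is therefore exceptional, the non-exceptional hypothesis forces $f^m\neq 1$ for every $m\geq 1$, hence $f^{l-k}-1\neq 0$ and this last set is finite. The case $a,b\in\F_1[Y/X]$ is symmetric, applied to $1/f$.

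To define the sheaf morphism $\psi^\#$ I specify it on the two affine charts by $X/Y\mapsto f$ and $Y/X\mapsto 1/f$; these are compatible on the overlap. Well-definedness amounts to $f\in\mathcal{O}_\mathfrak{a}$ whenever $\psi(\mathfrak{a})\in D_+(Y)$, which for archimedean $\mathfrak{a}$ is forced by the case-split and for primes is exactly the condition $v_P(f)\geq 0$ defining $\varphi_f^{-1}(D_+(Y))$. A direct stalk computation yields $\mathcal{O}_{\overline{\Spec\mathcal{O}_K},\mathfrak{a}}=\mathcal{O}_\mathfrak{a}$, using that any element of $K^\bullet$ has only finitely many poles together with the cofinite topology on the non-generic part. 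Locality of $\psi^\#_\mathfrak{a}$ at archimedean $\mathfrak{a}$ follows from $f\in m_\mathfrak{a}$ when $\psi(\mathfrak{a})=[0]$ and $1/f\in m_\mathfrak{a}$ when $\psi(\mathfrak{a})=[\infty]$; at non-archimedean points it is inherited from $\varphi_f$.

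Finally, commutativity of the diagram on $(\Spec\mathcal{O}_K)^\bullet$ follows because $\widetilde f=\gamma\circ\varphi_f^\bullet$ and $p=\pi_{\PL^1_{\F_1}}\circ\gamma$ on underlying maps by \autoref{factorization gamma}; at archimedean places it is immediate from $\pi_{\PL^1_{\F_1}}([0])=[0]$ and $\pi_{\PL^1_{\F_1}}([\infty])=[\infty]$. For the residue-field maps, $\check f_{[P]}=\widetilde f_P$ holds by construction, and at archimedean $\mathfrak{a}$ the source $\kappa(\check f(\mathfrak{a}))$ equals $\F_1$, so $\check f_\mathfrak{a}:\F_1\hookrightarrow\kappa(\mathfrak{a})$ is automatically injective. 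The principal obstacle in the whole argument is the continuity check of the second paragraph: it is exactly the non-exceptional hypothesis that keeps $f^m-1$ nonzero for every $m\geq 1$, hence keeps the preimages of basic opens of $\widetilde{\PL}^1_{\F_1}$ cofinite in the non-generic points.
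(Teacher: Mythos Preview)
Your proof is correct and follows essentially the same construction as the paper: extend $\psi$ and $\check f$ to the finitely many archimedean places by the case split $|f|_\mathfrak{a}\lessgtr 1$, define the sheaf map via $T\mapsto f$ on the two charts, and set $\check f_\mathfrak{a}$ to be $\widetilde f_P$ at finite places and the unique map $\F_1\hookrightarrow\kappa(\mathfrak{a})$ at archimedean ones. The paper's proof is terser at exactly the point you emphasize: it simply asserts that $\psi$ and $\check f$ are continuous, whereas you supply an explicit verification on the basic opens $U_{T^k,T^l}$ and observe that the fibres $\check f^{-1}([n])$ are finite precisely because $f^m-1\neq 0$ for all $m\geq 1$, which in turn follows from $f$ not being a root of unity, a consequence of non-exceptionality. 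This is a genuine detail the paper suppresses, and your argument for it is sound (your description of the complement is a finite superset rather than an exact computation, but that is all that is needed). A minor point: your sentence about the generic point is redundant, since $[0]\in\overline{\Spec\mathcal{O}_K}$ already lies in the non-archimedean open and is handled by $\widetilde f$.
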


\begin{proof}
If $\mathfrak{a}$ is an archimedean place, as $f$ is non-exceptional, $f \in m_\mathfrak{a}$ or $1/f \in m_\mathfrak{a}$. In this case, define
\vspace{2mm}
\[
\psi(\mathfrak{a}) := \smash{\left\{\begin{array}{ll}
      \langle X / Y \rangle \in D_+(Y) &\text{if } f \in m_\mathfrak{a}\\
    {}\langle Y / X \rangle \in D_+(X) &\text{if } 1/f \in m_\mathfrak{a}.
    \end{array}\right.}
\]
\\
For $Q \in (\Spec \mathcal{O}_K)^\bullet$, define $\psi([Q]) := p \circ \varphi_f^\bullet (Q)$. Note that $\psi: \overline{\Spec \mathcal{O}_K} \rightarrow \PL_{\F_1}^1$ is a continuous map.

Let $T := X / Y$ and define
\[
\begin{array}{cccc}
f^\#: & \F_1[T^\pm] & \longrightarrow & K^\bullet \\
& T & \longmapsto & f. 
\end{array}
\]
For $U \subseteq \PL^1_{\F_1}$ open, note that $f^\#\big(\Gamma(\mathcal{O}_{\PL^1_{\F_1}}, U)\big) \subseteq \Gamma\big(\mathcal{O}_{\overline{\Spec \mathcal{O}_K}},\psi^{-1}(U)\big)$ and define
\[
\begin{array}{cccc}
\psi^\#(U): &\Gamma(\mathcal{O}_{\PL^1_{\F_1}}, U) & \longrightarrow & \Gamma\big(\mathcal{O}_{\overline{\Spec \mathcal{O}_K}},\psi^{-1}(U)\big)\\
& z & \longmapsto & f^\#(z).
\end{array}
\]

Note that $(\psi, \psi^\#)$ is a morphism of monoidal spaces $\overline{\Spec \mathcal{O}_K} \rightarrow \PL_{\F_1}^1$ that extends $p \circ \varphi_f^\bullet: (\Spec \mathcal{O}_K)^\bullet \rightarrow \PL_{\F_1}^1$.


Next we construct $\check{f}$. If $\mathfrak{a}$ is an archimedean place, define

\[
\check{f}(\mathfrak{a}) := \smash{\left\{\begin{array}{ll}
      [0] &\text{if } f \in m_\mathfrak{a}\\
    {}[\infty] &\text{if } 1/f \in m_\mathfrak{a}.
    \end{array}\right.}
\]
\\
For $P \in (\Spec \mathcal{O}_K)^\bullet$, define $\check{f}([P]) := \widetilde{f}(P)$. Note that $\check{f}: \overline{\Spec \mathcal{O}_K} \rightarrow \widetilde{\PL}_{\F_1}^1$ is continuous. 

For $\mathfrak{a} \in \overline{\Spec \mathcal{O}_K}$, we define the injective morphism $\check{f}_\mathfrak{a}: \kappa(\check{f}(\mathfrak{a})) \hookrightarrow \kappa(\mathfrak{a})$ as follows: if $\mathfrak{a} = [P]$ for some $P \in \Spec \mathcal{O}_K$, define $\check{f}_\mathfrak{a} := \widetilde{f}_P$. If $\mathfrak{a}$ is archimedean, then $\kappa(\check{f}(\mathfrak{a})) \simeq \F_1$. In this case, define $\check{f}_\mathfrak{a}$ as the inclusion $\F_1 \hookrightarrow \kappa(\mathfrak{a})$.
\end{proof}

\begin{rem}
In \autoref{extension to the compactification}, if $\check{f}(\mathfrak{a}) \in \SCong D_+(Y)$, the diagram
\[
\begin{tikzcd}
\F_1[T] \arrow{r}{f^\#} \arrow[d, two heads]& \mathcal{O}_\mathfrak{a} \arrow[d, two heads]\\
\kappa(\check{f}(\mathfrak{a})) \arrow[r, hook, "\check{f}_\mathfrak{a}"] & \kappa(\mathfrak{a})
\end{tikzcd}
\]
commutes, and if $\check{f}(\mathfrak{a}) \in \SCong D_+(X)$,
\[
\begin{tikzcd}
\F_1[T^{-1}] \arrow{r}{f^\#} \arrow[d, two heads]& \mathcal{O}_\mathfrak{a} \arrow[d, two heads]\\
\kappa(\check{f}(\mathfrak{a})) \arrow[r, hook, "\check{f}_\mathfrak{a}"] & \kappa(\mathfrak{a}).
\end{tikzcd}
\]
commutes. Thus the maps $\check{f}_\mathfrak{a}$ are induced by $f^\#$.
\end{rem}


\bibliographystyle{alpha}
\bibliography{dimension}

\begin{thebibliography}{CHWW15}

\bibitem[Bru16]{LeBruyn16}
Lieven~Le Bruyn.
\newblock Absolute geometry and the habiro topology.
\newblock In {\em Absolute Arithmetic and {$\F_1$}-Geometry}, page 221–271.
  European Mathematical Society Publishing House, 2016.

\bibitem[CC10]{Connes-Consani10}
Alain Connes and Caterina Consani.
\newblock From monoids to hyperstructures: in search of an absolute arithmetic.
\newblock In {\em Casimir Force, Casimir Operators and the Riemann Hypothesis},
  page 147–198. De Gruyter, 2010.

\bibitem[CHWW15]{Cortinasetal15}
Guillermo Cortiñas, Christian Haesemeyer, Mark~E. Walker, and Charles Weibel.
\newblock Toric varieties, monoid schemes and cdh descent.
\newblock {\em Journal für die reine und angewandte Mathematik (Crelle's
  Journal)}, 2015(698):1--54, 2015.

\bibitem[CLS12]{Chuetal12}
Chenghao Chu, Oliver Lorscheid, and Rekha Santhanam.
\newblock Sheaves and {K}-theory for {$\F_1$}-schemes.
\newblock {\em Advances in Mathematics}, 229(4):2239--2286, 2012.

\bibitem[Dei05]{Deitmar05}
Anton Deitmar.
\newblock Schemes over {$\mathbb{F}_1$}.
\newblock In {\em \href{https://doi.org/10.1007/0-8176-4447-4_6}{Number Fields
  and Function Fields—Two Parallel Worlds}}, page 87–100. Birkhäuser
  Boston, 2005.

\bibitem[Jar23]{Jarra23}
Manoel Jarra.
\newblock Strong congruence spaces and dimension in $\mathbb{F}_1$-geometry.
\newblock Preprint, \arxiv{2305.15953}, 2023.

\bibitem[LR23]{Lorscheid-Ray23}
Oliver Lorscheid and Samarpita Ray.
\newblock The topological shadow of $\mathbb{F}_1$-geometry: congruence spaces.
\newblock Preprint, \arxiv{2305.12801}, 2023.

\bibitem[Neu99]{Neukirch99}
Jürgen Neukirch.
\newblock {\em Algebraic Number Theory}.
\newblock Grundlehren der mathematischen Wissenschaften. Springer Berlin,
  Heidelberg, 1999.

\bibitem[Ros02]{Rosen02}
Michael Rosen.
\newblock {\em Number Theory in Function Fields}.
\newblock Graduate Texts in Mathematics. Springer New York, 2002.

\bibitem[Smi93]{Smirnov93}
Alexander~L. Smirnov.
\newblock Hurwitz inequalities for number fields.
\newblock {\em St. Petersburg Math. J.}, 4(2):357--375, 1993.

\end{thebibliography}

\end{document}